\definecolor{halfgray}{gray}{0.55} 
\definecolor{webgreen}{rgb}{0,0.5,0}
\definecolor{webbrown}{rgb}{.6,0,0} \hypersetup{%
\newtheorem{theorem}{Theorem}
\newtheorem{lemma}{Lemma}
\newtheorem{corollary}{Corollary}
\newtheorem{definition}{Definition}
\newtheorem{example}{Example}
\newtheorem{remark}{Remark}
\renewcommand{\epsilon}{\varepsilon}
\def\Id{\text{\rm Id}}
\def\cA{\EuScript{A}}
\def\N{\mathbb{N}}
\def\Z{\mathbb{Z}}
\def\R{\mathbb{R}}
\begin{document}

\title{Parameterized shadowing for nonautonomous dynamics}

\author{Lucas Backes}
\address[Lucas Backes]{\noindent Departamento de Matem\'atica, Universidade Federal do Rio Grande do Sul, Av. Bento Gon\c{c}alves 9500, CEP 91509-900, Porto Alegre, RS, Brazil.}
\email{lucas.backes@ufrgs.br} 

\author{Davor Dragi\v cevi\'c}
\address[Davor Dragi\v cevi\'c]{Faculty  of Mathematics, University of Rijeka, Croatia}
\email{ddragicevic@math.uniri.hr}

\author{Xiao Tang}
\address[Xiao Tang]{School of Mathematical Sciences, Chongqing Normal University, Chongqing 401331, China}
\email[Corresponding author]{mathtx@163.com}


\keywords{$C^k$ parametrized shadowing; nonautonomous systems; exponential dichotomy}
\subjclass[2020]{Primary: 37C50; Secondary: 34A34, 39A05}

\maketitle

\begin{abstract} 
For nonautonomous and nonlinear differential and difference equations depending on a parameter, we formulate sufficient conditions under which they exhibit $C^k$, $k\in \N$ shadowing with respect to a parameter. Our results are applicable to situations when the linear part is not hyperbolic. In the case when the linear part is hyperbolic, we obtain  results dealing with  parameterized Hyers-Ulam stability.
\end{abstract}

\section{Introduction}
In the present paper, we consider nonautonomous and nonlinear differential equations of the form
\begin{equation}\label{fe}
    x'=A(t)+f_\lambda (t,x), \quad t\in \R.
\end{equation}
Here, $A(t)$, $t\in \R$ are linear operators acting  on  a Banach space $X=(X, |\cdot|)$ and $f_\lambda \colon \R\times X \to X$ is a nonlinear map for each $\lambda \in \Sigma$, where $\Sigma$ is an open subset of some Banach space. In~\cite{BDS}, the authors have formulated very general conditions under which~\eqref{fe} admits a \emph{shadowing property} which 
guarantees that in a neighborhood of  each approximate solution of~\eqref{fe} we can construct its exact solution.	 An important feature of the results established in~\cite{BDS} is that they do not require any hyperbolicity conditions for the linear part of~\eqref{fe}. In the setting when the linear part of~\eqref{fe} is hyperbolic (i.e. admits an exponential dichotomy or trichotomy), the results of~\cite{BDS} essentially reduce to the previously known results devoted to Hyers-Ulam stability for~\eqref{fe} (see~\cite{BD}).

We stress that the literature devoted to Hyers-Ulam stability for differential and difference equations is vast and contains many interesting contributions. In particular,  for some recent results devoted to the relationship between Hyers-Ulam stability and hyperbolicity, we refer to~\cite{BD0,bd, BD, BD2, BDPO, BDPS,Barbu1,Barbu2,BCDP,BLO,BOST, Pily} and references therein. In addition, we mention the works of Anderson and Onitsuka~\cite{AO},  
Fukutaka and Onitsuka~\cite{FO1, FO2, FO3},
 Popa and Ra\c{s}a~\cite{PR1,PR2} and Wang et. al~\cite{Wang1, Wang2}
among others.  For the description of the shadowing theory in the context of smooth dynamical systems, we refer to~\cite{Pal,Pil,PilSak}.

In order to describe the results of the present paper, 
 suppose  that for each $\lambda \in \Sigma$ we have an approximate solution $y_\lambda$ of~\eqref{fe} which is shadowed by an unique exact solution $x_\lambda$. Our main objective is to study the dependence of $x_\lambda$ on the parameter $\lambda$.  More precisely, given a $k\in \mathbb N$,
 in Theorem~\ref{thm-diff} we formulate sufficient conditions under which the map $\lambda \mapsto x_\lambda(t)$ is $C^k$ for each $t\in\R$.
In the particular case when the linear part of~\eqref{fe} admits an exponential dichotomy, our result simplifies and  yields a parameterized Hyers-Ulam stability result (see Corollary~\ref{hh}). To the best of our knowledge the parameterized version of Hyers-Ulam stability has not been studied earlier and even our Corollary~\ref{hh} is a completely new result. 

Our techniques rely on those developed in~\cite{BDS}. More precisely, $y_\lambda-x_\lambda$ can be obtained as a fixed point of an operator $\mathcal T_\lambda$ which is a contraction on a closed ball around origin in a suitable Banach space $\mathcal Y$. Thus, it remains to study the regularity of the fixed point of $\mathcal T_\lambda$ with respect to the parameter $\lambda$.

The paper is organized as follows: in Section~\ref{sec: preliminaries} we recall some preliminary material from~\cite{BDS}. In Section~\ref{sec: continuous time} we establish a parameterized shadowing result for~\eqref{fe}. Finally, in Section~\ref{sec: discrete time} we discuss the parameterized shadowing for a discrete-time counterpart of~\eqref{fe}.

\section{Preliminaries} \label{sec: preliminaries}
Let $X=(X, |\cdot |)$ be an arbitrary Banach space. By $\mathcal B(X)$ we will denote the space of all bounded linear operators on $X$ equipped with the operator norm $\| \cdot \|$. Let $A\colon \R \to \mathcal B(X)$ be a continuous map. We consider the associated linear differential equation
\begin{equation}\label{lde}
x'=A(t)x, \quad t\in \R.
\end{equation}
By $T(t,s)$ we will denote the evolution family corresponding to~\eqref{lde}. Assume that $P\colon \R \to \mathcal B(X)$ is a continuous map and set
\[
\mathcal G(t,s)=\begin{cases}
T(t,s)P(s) &t\ge s; \\
-T(t,s)(\Id-P(s)) & t<s,
\end{cases}
\]
where $\Id$ denotes the identity operator on $X$. 

Let $\Sigma=(\Sigma, |\cdot |)$ be (an open subset of) another Banach space. Although the norms on $X$ and $\Sigma$ are denoted by the same symbol this will not cause confusion. Suppose that  for each $\lambda\in \Sigma$, we have continuous map $f_\lambda \colon \R \times X\to X$ with the property that  there exists a continuous map $c\colon \R \to [0, \infty)$ such that 
\begin{equation}\label{lip}
|f_\lambda (t,x)-f_\lambda (t,y)| \le c(t)|x-y|, 
\end{equation}
for $t\in \R$, $\lambda \in \Sigma$ and $x, y\in X$.

For each $\lambda \in \Sigma$, we consider the nonlinear differential equation given by 
\begin{equation}\label{nde}
x'=A(t)x+f_\lambda (t,x), \quad t\in \R.
\end{equation}

The following result is essentially a consequence of~\cite[Theorem 1]{BDS}.
We include it  primarily because many arguments in the following section
are based on its proof.
\begin{theorem}\label{PR}
Suppose that~\eqref{lde} does not have nonzero bounded solutions, and 
assume that 
\begin{equation}\label{contr}
q:=\sup_{t\in \R} \int_{-\infty}^\infty c(s)\|\mathcal G(t,s)\| \, ds <1.
\end{equation}
Furthermore, let $\epsilon \colon \R \to (0, \infty)$ be a continuous map such that 
\begin{equation}\label{L}
L:=\sup_{t\in \R} \int_{-\infty}^\infty \epsilon(s)\|\mathcal G(t,s)\| \, ds <+\infty,
\end{equation}
and, given $\lambda \in \Sigma$, let  $y_\lambda\colon \R \to X$ be a continuously  differentiable map satisfying
\begin{equation}\label{pse}
|y_\lambda'(t)-A(t)y_\lambda(t)-f_\lambda (t, y_\lambda(t))| \le \epsilon (t) \quad \text{for $t\in \R$.}
\end{equation}
Then, for each $\lambda\in \Sigma$ there exists a unique solution $x_\lambda\colon \R \to X$ of~\eqref{nde} such that 
\begin{equation}\label{shad}
\sup_{t\in \R}|x_\lambda (t)-y_\lambda(t)| \le \frac{L}{1-q}.
\end{equation}
\end{theorem}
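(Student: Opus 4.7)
The plan is to reduce the problem to a Banach fixed point argument, following the standard Perron--type strategy that the Green's function $\mathcal G$ inverts the operator $\tfrac{d}{dt}-A(t)$ on the space of bounded continuous maps. Write $z_\lambda:=x_\lambda-y_\lambda$ and set
\[
g_\lambda(t):=y_\lambda'(t)-A(t)y_\lambda(t)-f_\lambda(t,y_\lambda(t)),
\]
so that $|g_\lambda(t)|\le\epsilon(t)$ by~\eqref{pse}. A direct computation shows that $x_\lambda$ solves~\eqref{nde} if and only if $z_\lambda$ satisfies
\[
z_\lambda'(t)=A(t)z_\lambda(t)+\bigl[f_\lambda(t,y_\lambda(t)+z_\lambda(t))-f_\lambda(t,y_\lambda(t))\bigr]-g_\lambda(t).
\]

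Let $\mathcal Y$ be the Banach space of bounded continuous maps $\R\to X$ with the sup norm $\|\cdot\|_\infty$, and define, for each $\lambda\in\Sigma$, the operator $\mathcal T_\lambda\colon\mathcal Y\to\mathcal Y$ by
\[
(\mathcal T_\lambda z)(t)=\int_{-\infty}^{\infty}\mathcal G(t,s)\Bigl\{\bigl[f_\lambda(s,y_\lambda(s)+z(s))-f_\lambda(s,y_\lambda(s))\bigr]-g_\lambda(s)\Bigr\}\,ds.
\]
First I would check that $\mathcal T_\lambda$ is well defined: by~\eqref{lip}, \eqref{contr} and \eqref{L},
\[
|(\mathcal T_\lambda z)(t)|\le\int_{-\infty}^{\infty}\|\mathcal G(t,s)\|\bigl(c(s)\|z\|_\infty+\epsilon(s)\bigr)\,ds\le q\|z\|_\infty+L,
\]
so $\mathcal T_\lambda z$ is bounded; continuity follows from standard dominated-convergence considerations once one notes that $\mathcal G$ is only discontinuous on the diagonal (which has measure zero).

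Next, the same estimate shows that $\mathcal T_\lambda$ maps the closed ball $B=\{z\in\mathcal Y:\|z\|_\infty\le L/(1-q)\}$ into itself, and using~\eqref{lip} again,
\[
\|\mathcal T_\lambda z_1-\mathcal T_\lambda z_2\|_\infty\le q\,\|z_1-z_2\|_\infty,\qquad z_1,z_2\in\mathcal Y.
\]
Since $q<1$ by~\eqref{contr}, the Banach fixed point theorem yields a unique fixed point $z_\lambda\in B$, and setting $x_\lambda:=y_\lambda+z_\lambda$ immediately gives~\eqref{shad}.

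The main obstacle---and the place where the hypothesis that~\eqref{lde} has no nonzero bounded solutions enters---is to show that the fixed point equation $z_\lambda=\mathcal T_\lambda z_\lambda$ is actually equivalent to the differential equation for $z_\lambda$, i.e.\ that $x_\lambda$ really solves~\eqref{nde}. One direction is the observation that the Green's function $\mathcal G$ represents a particular bounded solution of the inhomogeneous linear equation with source $h(s):=[f_\lambda(s,y_\lambda(s)+z_\lambda(s))-f_\lambda(s,y_\lambda(s))]-g_\lambda(s)$; here one differentiates under the integral using the splitting at $s=t$ coming from the definition of $\mathcal G$ and uses the cocycle relation for $T(t,s)$ to recover the ODE. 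For the converse, if $\tilde x_\lambda$ were another exact solution of~\eqref{nde} with $\sup_t|\tilde x_\lambda(t)-y_\lambda(t)|\le L/(1-q)$, then $\tilde z_\lambda-z_\lambda$ would be a bounded solution of~\eqref{lde}, which must vanish; this gives uniqueness. The conclusion~\eqref{shad} is built into the fixed point ball. I would refer to the arguments in~\cite{BDS} for the verifications that the fixed point lies in the right class and that $\mathcal G$ indeed produces solutions, as these are exactly the technical core reused from Theorem~1 there.
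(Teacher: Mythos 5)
Your proposal follows essentially the same route as the paper: the operator you define is algebraically identical to the paper's $\mathcal T_\lambda$ in~\eqref{op} (expanding $g_\lambda$, the two occurrences of $f_\lambda(s,y_\lambda(s))$ cancel and one recovers the integrand $A(s)y_\lambda(s)+f_\lambda(s,y_\lambda(s)+z(s))-y_\lambda'(s)$), and your well-definedness, ball-invariance and contraction estimates coincide with the paper's, as does the deferral to~\cite{BDS} for the verification that the fixed-point equation is equivalent to the differential equation.

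One step in your uniqueness sketch is wrong as stated. If $\tilde x_\lambda$ is another solution of~\eqref{nde} close to $y_\lambda$, then $w:=\tilde x_\lambda-x_\lambda=\tilde z_\lambda-z_\lambda$ satisfies
\[
w'(t)=A(t)w(t)+f_\lambda(t,\tilde x_\lambda(t))-f_\lambda(t,x_\lambda(t)),
\]
which is \emph{not} the homogeneous equation~\eqref{lde}, so you cannot conclude directly that $w$ vanishes from the hypothesis on~\eqref{lde}. The way the hypothesis is actually used (following~\cite{BDS}) is this: if $w$ is any bounded solution of $w'=A(t)w+h(t)$ with $\sup_{t}\int_{-\infty}^{\infty}\|\mathcal G(t,s)\|\,|h(s)|\,ds<\infty$, then $t\mapsto w(t)-\int_{-\infty}^{\infty}\mathcal G(t,s)h(s)\,ds$ is a bounded solution of~\eqref{lde} and hence zero. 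Applying this to $\tilde z_\lambda$ with $h(s)=A(s)y_\lambda(s)+f_\lambda(s,y_\lambda(s)+\tilde z_\lambda(s))-y_\lambda'(s)$ shows that $\tilde z_\lambda$ is itself a fixed point of $\mathcal T_\lambda$ in $\mathcal D$, and then uniqueness of the fixed point gives $\tilde z_\lambda=z_\lambda$. Since both you and the paper outsource these verifications to~\cite{BDS}, this does not derail the proof, but the sentence claiming that $\tilde z_\lambda-z_\lambda$ is a bounded solution of~\eqref{lde} should be corrected along the above lines.
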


\begin{proof}
Let $\mathcal Y$ denote the space of all continuous maps $z\colon \R \to X$ such that 
\[
\|z\|_\infty:=\sup_{t\in \R} |z(t)|<+\infty.
\]
Then, $(\mathcal Y, \| \cdot \|_\infty)$ is a Banach space. Take a fixed $\lambda \in \Sigma$ and set
\begin{equation}\label{op}
(\mathcal T_\lambda z)(t):=\int_{-\infty}^\infty \mathcal G(t,s)(A(s)y_\lambda(s)+f_\lambda (s, y_\lambda(s)+z(s))-y_\lambda'(s))\, ds, 
\end{equation}
for $t\in \R$ and $z\in \mathcal Y$. Note that it follows from~\eqref{lip} and~\eqref{pse} that 
\[
\begin{split}
& |A(s)y_\lambda(s)+f_\lambda (s, y_\lambda(s)+z(s))-y_\lambda'(s)| \\
&\le |A(s)y_\lambda(s)+f_\lambda (s, y_\lambda(s))-y_\lambda'(s)|+|f_\lambda (s, y_\lambda(s)+z(s))-f_\lambda (s, y_\lambda(s))| \\
&\le \epsilon (s)+c(s)|z(s)|,
\end{split}
\]
for $s\in \R$. Hence, 
\[
|(\mathcal T_\lambda z)(t)| \le \int_{-\infty}^\infty \|\mathcal G(t,s)\|( \epsilon (s)+c(s)|z(s)|)\, ds,
\]
for $t\in \R$. This together with~\eqref{contr} and~\eqref{L} implies that
\begin{equation}\label{z}
\|\mathcal T_\lambda z\|_\infty \le L+q\|z\|_\infty, \quad z\in \mathcal Y.
\end{equation}
In particular, $\mathcal T_\lambda \colon \mathcal Y \to \mathcal Y$ is well-defined. Moreover, setting $z=0$ in~\eqref{z} we have that 
\begin{equation}\label{zero}
\|\mathcal T_\lambda 0\|_\infty \le L.
\end{equation}

Take now $z_1, z_2\in \mathcal Y$. Observe that~\eqref{lip} implies that
\[
|f_\lambda (s, y_\lambda(s)+z_1(s))-f_\lambda (s, y_\lambda(s)+z_2(s))| \le c(s)|z_1(s)-z_2(s)|, 
\]
for $s\in \R$. Consequently, 
\[
|(\mathcal T_\lambda z_1)(t)-(\mathcal T_\lambda z_2)(t)| \le \int_{-\infty}^\infty  c(s)\| \mathcal G(t,s)\| |z_1(s)-z_2(s)|\, ds,
\]
for $t\in \R$.  This together with~\eqref{contr} gives that 
\begin{equation}\label{ss}
\|\mathcal T_\lambda z_1-\mathcal T_\lambda z_2\|_\infty \le q\|z_1-z_2\|_\infty. 
\end{equation}
Set
\[
\mathcal D:=\bigg \{z\in \mathcal Y: \|z\|_\infty \le \frac{L}{1-q} \bigg \}.
\]
It is apparent that $\mathcal D$ is a non-empty closed subset of $\mathcal Y$ and it is therefore a complete metric space with the distance $\|\cdot\|_\infty$.
For $z\in \mathcal D$, it follows from~\eqref{zero} and~\eqref{ss} that 
\[
\|\mathcal T_\lambda z\|_\infty \le \|\mathcal T_\lambda 0\|_\infty +\| \mathcal T_\lambda z-\mathcal T_\lambda 0\|_\infty \le L+\frac{qL}{1-q}=\frac{L}{1-q}.
\]
Thus, $\mathcal T_\lambda (\mathcal D)\subset \mathcal D$. By~\eqref{ss}, we have that $\mathcal T_\lambda$ is a contraction on $\mathcal D$, and therefore it has a unique fixed point $z_\lambda \in \mathcal D$. It is straightforward to verify that $x_\lambda:=y_\lambda+z_\lambda$ is a solution of~\eqref{nde}. Moreover, since $z_\lambda \in \mathcal D$, we have that~\eqref{shad} holds. The uniqueness of $x_\lambda$ can be established by repeating the arguments in the proof of~\cite[Theorem 1]{BDS}.
 The proof of the theorem is completed.
\end{proof}

\section{Regularity with respect to a parameter}\label{sec: continuous time}
We are now interested in formulating sufficient conditions under which the map $\lambda \mapsto x_\lambda (t)$ is of class  $C^k$ with $k\in\mathbb{N}$, for each $t\in \R$.

\subsection{$C^k$ regularity without exponential dichotomy}
First, we recall some notions for clarity. Let $\phi:Y_1\times Y_2\to Z$ be a map, where $Y_1$, $Y_2$ and $Z$ are three Banach spaces.  We say that the map $y_2\mapsto \phi(y_1, y_2)$ is $C^k$ on an open set $\mathcal U\subset Y_1\times Y_2$ if $(y_1, y_2)\mapsto \phi(y_1, y_2)$ is $k$-times differentiable with respect to $y_2$ on $\mathcal U$ and $\frac{\partial^i}{\partial {y_2}^i}\phi$, $i$-th partial derivative of $\phi$ with respect to $y_2$,
 is continuous on $\mathcal U$ for $1\le i\le k$. By $\frac{\partial^i}{\partial {y_2}^i}\phi(y_1,y_2)$ we denote the value of $\frac{\partial^i}{\partial {y_2}^i}\phi$ at the point $(y_1,y_2)\in Y_1\times Y_2$. For a map with more variables, we can have analogous notions.

\begin{theorem}\label{thm-diff}
For each $\lambda \in \Sigma$ let $y_\lambda:\mathbb{R}\to X$ be a continuously differentiable map satisfying \eqref{pse} and suppose that the assumptions of Theorem~\ref{PR} hold. Let $x_\lambda:\mathbb{R}\to X$ be the map associated to $y_\lambda$ by Theorem \ref{PR} and take  $k\in \mathbb N$. 
In addition, suppose there exists $C>0$ such that the following conditions hold:
\begin{itemize}
\item the map $(\lambda,x)\mapsto f_\lambda(t,x)$ is $C^{k+1}$ and for all $2\le i\le k+1$ and $0\le j\le i$,
\begin{align}
\sup_{\lambda\in\Sigma}\sup_{x\in X}\left\|\frac{\partial^i}{\partial \lambda^{i-j}\partial x^j}f_\lambda(t,x)\right\|\le C\epsilon(t);
\label{bounded-deriva-f}
\end{align}

\item the maps $\lambda\mapsto y_\lambda(t)$ and $\lambda\mapsto y_\lambda'(t)$ both are $C^{k+1}$ such that
\begin{equation}\label{pse-derivatives}
\bigg|\frac{\partial^i}{\partial\lambda^i}y_\lambda'(t)-A(t)\frac{\partial^i}{\partial\lambda^i}y_\lambda(t)-\frac{d^i}{d\lambda^i}f_\lambda (t, y_\lambda(t))\bigg| \le C\epsilon (t) 
\end{equation}
for all $1\le i\le k+1$, all $t\in \R$ and all $\lambda \in \Sigma$. Furthermore, 
for every $\lambda_0\in \Sigma$ there exist $M_{\lambda_0}>1$ and a neighborhood $\Sigma_0$ of $\lambda_0$ such that
\begin{equation}\label{condi-bound-deri-y-lambda}
 \max_{1\le i\le k+1}\sup_{t\in\R}\bigg\| \frac{\partial^i}{\partial\lambda^i} y_\lambda(t) \bigg\|\le M_{\lambda_0}\quad\text{for each}\quad \lambda\in \Sigma_0.
\end{equation}
\end{itemize}
Then, the map $\lambda \mapsto x_\lambda(t)$  is $C^k$ for each $t\in \R$.
\end{theorem}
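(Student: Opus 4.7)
The plan is to prove that $\lambda\mapsto z_\lambda:=x_\lambda-y_\lambda$ is $C^k$ as a map $\Sigma\to\mathcal Y$; since evaluation $z\mapsto z(t)$ is a bounded linear map $\mathcal Y\to X$, this gives the theorem. I proceed by induction on $k$, identifying at each stage the formal $k$-th Fr\'echet derivative of $z_\lambda$ with the unique fixed point of an affine contraction
\[
w=\mathcal L_\lambda w + h_\lambda^{(k)},\qquad (\mathcal L_\lambda w)(t):=\int_{-\infty}^{\infty}\mathcal G(t,s)\,\partial_x f_\lambda(s,x_\lambda(s))\,w(s)\,ds,
\]
in which $h_\lambda^{(k)}\in\mathcal Y$ is an explicit inhomogeneity assembled from $y_\lambda$, $f_\lambda$, and the lower-order derivatives of $z_\lambda$. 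Since \eqref{lip} forces $\|\partial_x f_\lambda(s,\cdot)\|\le c(s)$, the bound \eqref{contr} yields $\|\mathcal L_\lambda\|\le q<1$, so $(I-\mathcal L_\lambda)^{-1}$ is a bounded operator on $\mathcal Y$ with norm $\le 1/(1-q)$, uniformly in $\lambda$.

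\medskip

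The base case (continuity of $\lambda\mapsto z_\lambda$) follows from the uniform-contraction estimate $\|z_\lambda-z_{\lambda_0}\|_\infty\le (1-q)^{-1}\|\mathcal T_\lambda z_{\lambda_0}-\mathcal T_{\lambda_0}z_{\lambda_0}\|_\infty$, the right-hand side vanishing as $\lambda\to\lambda_0$ by dominated convergence with integrable majorant $(\epsilon(s)+c(s)L/(1-q))\|\mathcal G(t,s)\|$. For $k=1$, formally differentiating $z_\lambda=\mathcal T_\lambda z_\lambda$ in $\lambda$ via the chain rule and isolating the term in which $\partial_\lambda$ lands on $z_\lambda$ yields $w=\mathcal L_\lambda w+h_\lambda^{(1)}$ with
\[
h_\lambda^{(1)}(t)=\int_{-\infty}^{\infty}\mathcal G(t,s)\bigl[A(s)\partial_\lambda y_\lambda(s)+\partial_\lambda f_\lambda(s,x_\lambda(s))+\partial_x f_\lambda(s,x_\lambda(s))\partial_\lambda y_\lambda(s)-\partial_\lambda y_\lambda'(s)\bigr]ds.
\]
Rewriting the bracket as the $i=1$ residual of \eqref{pse-derivatives} plus corrections coming from replacing $y_\lambda$ by $x_\lambda=y_\lambda+z_\lambda$ inside $\partial_\lambda f_\lambda$ and $\partial_x f_\lambda$ (controlled by a first-order Taylor expansion using \eqref{bounded-deriva-f} at $i=2$ and \eqref{condi-bound-deri-y-lambda}) shows that the bracket is pointwise $O(\epsilon(s))$ uniformly on a neighborhood $\Sigma_0$ of any $\lambda_0$, so $h_\lambda^{(1)}\in\mathcal Y$ by \eqref{L}. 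I would then set $w_\lambda^{(1)}:=(I-\mathcal L_\lambda)^{-1}h_\lambda^{(1)}$ and verify it is the genuine Fr\'echet derivative by subtracting the fixed-point equations at $\lambda$ and $\lambda+\eta$, Taylor-expanding $f_\lambda$ to first order in both variables, and absorbing the linear-in-$(z_{\lambda+\eta}-z_\lambda)$ error using $\|\mathcal L_\lambda\|<1$.

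\medskip

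For the inductive step $k-1\Rightarrow k$, I would differentiate $z_\lambda=\mathcal T_\lambda z_\lambda$ formally $k$ times, applying Fa\`a di Bruno's formula to the composition $\lambda\mapsto f_\lambda(s,y_\lambda(s)+z_\lambda(s))$. Peeling off the distinguished summand $\partial_x f_\lambda(s,x_\lambda(s))\partial_\lambda^{k}z_\lambda(s)$, which reproduces $\mathcal L_\lambda w$ with $w=\partial_\lambda^{k}z_\lambda$, one again obtains $w=\mathcal L_\lambda w+h_\lambda^{(k)}$, where $h_\lambda^{(k)}$ is a finite sum of integrals involving the mixed partials $\frac{\partial^i}{\partial\lambda^{i-j}\partial x^j}f_\lambda(s,x_\lambda(s))$ for $2\le i\le k$, paired with products of lower-order derivatives of $y_\lambda$ and $z_\lambda$. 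The bound \eqref{bounded-deriva-f} dominates the $f_\lambda$-derivatives by $C\epsilon(s)$; \eqref{pse-derivatives}, \eqref{condi-bound-deri-y-lambda}, and the inductive hypothesis control the remaining factors uniformly on $\Sigma_0$; and \eqref{L} then places $h_\lambda^{(k)}\in\mathcal Y$ with continuous $\lambda$-dependence. Repeating the Taylor-remainder verification, this time with \eqref{bounded-deriva-f} at $i=k+1$ dominating the top-order remainder uniformly in $s$, identifies $w_\lambda^{(k)}=(I-\mathcal L_\lambda)^{-1}h_\lambda^{(k)}$ as the $k$-th Fr\'echet derivative of $z_\lambda$; its continuity in $\lambda$ follows from that of $\mathcal L_\lambda$ and $h_\lambda^{(k)}$, completing the induction.

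\medskip

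The main obstacle, as is standard for such parameter-regularity results, is the verification that the explicitly constructed $w_\lambda^{(k)}$ actually realises the $k$-th Fr\'echet derivative of $z_\lambda$ rather than merely solving the formally differentiated equation. This is where the slack between \eqref{bounded-deriva-f} at $i\le k$ (which places $h_\lambda^{(k)}$ in $\mathcal Y$) and at $i=k+1$ (which dominates the Taylor remainder uniformly in $s$) is consumed, and where the local bound \eqref{condi-bound-deri-y-lambda} restricts the estimates to a neighborhood $\Sigma_0$ of each $\lambda_0\in\Sigma$. A secondary technical point is the first-order bookkeeping: since \eqref{bounded-deriva-f} starts at $i\ge 2$, the $k=1$ case relies on the combination of \eqref{pse-derivatives} at $i=1$ and \eqref{lip} to absorb the otherwise uncontrolled first-order derivatives of $f_\lambda$ into the $\epsilon$-bound.
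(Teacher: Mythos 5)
Your proposal is correct and follows essentially the same route as the paper: both arguments hinge on the uniform contraction bound $\|\partial_z\mathcal T_\lambda z_\lambda\|\le q<1$ making $\Id-\partial_z\mathcal T_\lambda z_\lambda$ (your $I-\mathcal L_\lambda$) invertible, an induction expressing $\partial_\lambda^i z_\lambda$ as $(I-\mathcal L_\lambda)^{-1}$ applied to an inhomogeneity built from lower-order derivatives, and Taylor-remainder estimates in which \eqref{bounded-deriva-f} at order $i+1$, \eqref{pse-derivatives} and \eqref{condi-bound-deri-y-lambda} dominate everything by $C\epsilon(s)$ so that \eqref{L} closes the argument. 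The only difference is organizational: the paper factors the work into an abstract lemma on $C^k$ dependence of fixed points of contractions plus separate verifications that $(\lambda,z)\mapsto\mathcal T_\lambda z$ is jointly $C^k$, whereas you differentiate the fixed-point identity directly.
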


\begin{remark}
    Observe that if the map $\lambda \mapsto y_\lambda$ is constant and condition \eqref{bounded-deriva-f} holds then conditions \eqref{pse-derivatives} and \eqref{condi-bound-deri-y-lambda} are automatically satisfied.
\end{remark}

In order to establish the statement of Theorem~\ref{thm-diff}, we need to set up several auxiliary results. We first observe that since $x_\lambda=y_\lambda+z_\lambda$, it is sufficient to prove that $\lambda \mapsto z_\lambda(t)$ is $C^k$ for each $t\in \R$. In fact, we will prove that the map $\lambda \mapsto z_\lambda$ is $C^k$ as a map from $\Sigma$ to $\mathcal Y$, which immediately implies the desired conclusion.

Let $\mathcal T_\lambda$ be the operator constructed in the proof of Theorem~\ref{PR} (see~\eqref{op}).

\begin{lemma}\label{lem: cont fixed point}
Suppose $\Sigma \ni \lambda \mapsto \mathcal T_\lambda z_{\lambda_0}$ is continuous at $\lambda_0$  for every $\lambda_0\in \Sigma$. Then, $\lambda \mapsto z_\lambda$ is continuous.
\end{lemma}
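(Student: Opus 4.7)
The plan is to exploit the uniform contraction property of $\mathcal T_\lambda$ established in the proof of Theorem~\ref{PR} together with the fact that each $z_\lambda$ is the fixed point of $\mathcal T_\lambda$ in $\mathcal D$. This is the standard continuous dependence argument for a family of contractions with a common contraction constant.

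Fix $\lambda_0 \in \Sigma$ and an arbitrary $\lambda \in \Sigma$. Since $z_\lambda = \mathcal T_\lambda z_\lambda$ and $z_{\lambda_0} = \mathcal T_{\lambda_0} z_{\lambda_0}$, I would write
\[
\| z_\lambda - z_{\lambda_0}\|_\infty = \|\mathcal T_\lambda z_\lambda - \mathcal T_{\lambda_0} z_{\lambda_0}\|_\infty \le \|\mathcal T_\lambda z_\lambda - \mathcal T_\lambda z_{\lambda_0}\|_\infty + \|\mathcal T_\lambda z_{\lambda_0} - \mathcal T_{\lambda_0} z_{\lambda_0}\|_\infty,
\]
and then invoke the uniform contraction estimate~\eqref{ss}, which holds with the same constant $q<1$ for every value of the parameter, to bound the first term by $q\|z_\lambda - z_{\lambda_0}\|_\infty$. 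Rearranging yields
\[
\| z_\lambda - z_{\lambda_0}\|_\infty \le \frac{1}{1-q} \|\mathcal T_\lambda z_{\lambda_0} - \mathcal T_{\lambda_0} z_{\lambda_0}\|_\infty.
\]

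The right-hand side tends to $0$ as $\lambda \to \lambda_0$ by the standing hypothesis that $\lambda \mapsto \mathcal T_\lambda z_{\lambda_0}$ is continuous at $\lambda_0$ (note that $\mathcal T_{\lambda_0} z_{\lambda_0}$ is precisely the value of this map at $\lambda_0$). Consequently $\|z_\lambda - z_{\lambda_0}\|_\infty \to 0$ as $\lambda \to \lambda_0$, which gives continuity of $\lambda \mapsto z_\lambda$ at $\lambda_0$; since $\lambda_0$ was arbitrary, continuity follows on all of $\Sigma$.

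There is really no hard step here: the main point is that the contraction constant $q$ does not depend on $\lambda$, which decouples the estimate and lets the single hypothesis at the base point $\lambda_0$ control the variation of the fixed point. The only thing that needs care is the bookkeeping of which point the operator acts on, i.e. that in the triangle inequality one compares $\mathcal T_\lambda$ and $\mathcal T_{\lambda_0}$ at the \emph{same} argument $z_{\lambda_0}$ so as to bring the hypothesis into play.
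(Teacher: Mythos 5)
Your argument is correct and is essentially identical to the paper's proof: the same triangle-inequality decomposition comparing $\mathcal T_\lambda$ and $\mathcal T_{\lambda_0}$ at the common argument $z_{\lambda_0}$, the same use of the $\lambda$-uniform contraction constant $q<1$ from~\eqref{ss}, and the same rearrangement yielding $\|z_\lambda - z_{\lambda_0}\|_\infty \le \tfrac{1}{1-q}\|\mathcal T_\lambda z_{\lambda_0}-\mathcal T_{\lambda_0} z_{\lambda_0}\|_\infty$. The only difference is notational (the paper writes $\lambda$ and $\lambda+h$ where you write $\lambda_0$ and $\lambda$).
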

\begin{proof} Fix an arbitrary $\lambda\in \Sigma$. 
 Then, for $h\in \Sigma$ we have (see~\eqref{ss}) that 
\begin{displaymath}
\begin{split}
\|z_{\lambda+h}-z_\lambda\|_{\infty}&=\|\mathcal T_{\lambda+h} z_{\lambda+h}-\mathcal T_{\lambda} z_{\lambda}\|_{\infty}\\
&\leq \|\mathcal T_{\lambda+h} z_{\lambda+h}-\mathcal T_{\lambda+h} z_{\lambda}\|_{\infty}+\|\mathcal T_{\lambda+h} z_{\lambda}-\mathcal T_{\lambda} z_{\lambda}\|_{\infty}\\
&\leq q \|z_{\lambda+h}-z_\lambda\|_{\infty}+ \|\mathcal T_{\lambda+h} z_{\lambda}-\mathcal T_{\lambda} z_{\lambda}\|_{\infty}.
\end{split}
\end{displaymath}
Since $q <1$, we see that 
\begin{equation}\label{eq: cont z lambda}
\begin{split}
\|z_{\lambda+h}-z_\lambda\|_{\infty}&\leq \frac{1}{1-q} \|\mathcal T_{\lambda+h} z_{\lambda}-\mathcal T_{\lambda} z_{\lambda}\|_{\infty}.
\end{split}
\end{equation}
According to our assumption, the right hand side of \eqref{eq: cont z lambda} goes to $0$ as $|h|\to 0$, which results in the continuity as claimed. The proof is completed.
\end{proof}

\begin{lemma}\label{lem: diff fixed point}
Let $k\in \N$. Suppose that the map $\Sigma \times\mathcal Y  \ni ( \lambda,z)\mapsto \mathcal{T}_{\lambda}z$ is $C^k$ on an open set $\mathcal W$ containing the set ${\mathcal S}:=\{(\lambda, z_{\bar\lambda}) :\lambda, \bar\lambda\in \Sigma \}$.
Then, $\Sigma \ni \lambda \mapsto z_{\lambda}$ is also $C^k$.
\end{lemma}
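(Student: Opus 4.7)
The plan is to apply the (Banach-space) $C^k$ implicit function theorem to the map
\[
F\colon \mathcal{W} \to \mathcal{Y}, \qquad F(\lambda, z) := z - \mathcal{T}_\lambda z,
\]
at each base point $(\lambda_0, z_{\lambda_0}) \in \mathcal{S} \subset \mathcal{W}$. Since $(\lambda, z) \mapsto \mathcal{T}_\lambda z$ is $C^k$ on $\mathcal{W}$ by hypothesis and the identity map is trivially $C^k$, $F$ is $C^k$ on $\mathcal{W}$, and $F(\lambda_0, z_{\lambda_0}) = 0$ by the fixed-point relation $z_{\lambda_0} = \mathcal{T}_{\lambda_0} z_{\lambda_0}$.

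The crux is to verify that the partial derivative $\partial_z F(\lambda_0, z_{\lambda_0}) = \Id_{\mathcal{Y}} - \partial_z \mathcal{T}_{\lambda_0}(z_{\lambda_0})$ is a toplinear isomorphism of $\mathcal{Y}$. For this I would exploit the global Lipschitz estimate~\eqref{ss}, namely $\|\mathcal{T}_\lambda z_1 - \mathcal{T}_\lambda z_2\|_\infty \le q \|z_1 - z_2\|_\infty$ for all $z_1, z_2 \in \mathcal{Y}$. Combined with the differentiability of $\mathcal{T}_\lambda$ in $z$, this gives via the mean value inequality that $\|\partial_z \mathcal{T}_\lambda(z)\| \le q < 1$ at every $(\lambda, z) \in \mathcal{W}$. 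Hence the Neumann series yields a bounded inverse for $\Id_{\mathcal{Y}} - \partial_z \mathcal{T}_{\lambda_0}(z_{\lambda_0})$ of norm at most $1/(1-q)$.

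The $C^k$ implicit function theorem then produces open neighborhoods $U \subset \Sigma$ of $\lambda_0$ and $V \subset \mathcal{Y}$ of $z_{\lambda_0}$, together with a unique $C^k$ map $\phi \colon U \to V$ satisfying $F(\lambda, \phi(\lambda)) = 0$ for $\lambda \in U$. Now $\phi(\lambda)$ is a fixed point of $\mathcal{T}_\lambda$ on $\mathcal{Y}$; but~\eqref{ss} shows that $\mathcal{T}_\lambda$ is a genuine contraction on the whole Banach space $\mathcal{Y}$, so its fixed point there is unique and must coincide with $z_\lambda$. Consequently $\phi(\lambda) = z_\lambda$ on $U$, and $\lambda \mapsto z_\lambda$ is $C^k$ in a neighborhood of $\lambda_0$. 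Since $\lambda_0 \in \Sigma$ was arbitrary, $\lambda \mapsto z_\lambda$ is $C^k$ on all of $\Sigma$.

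I expect the main (and only real) technical point to be the invertibility of $\partial_z F(\lambda_0, z_{\lambda_0})$, which however reduces cleanly to the Neumann-series argument above. The remaining mild subtlety is the identification of the locally defined IFT solution with the globally defined fixed-point map $z_\lambda$; here it is essential that the Lipschitz bound~\eqref{ss} holds on all of $\mathcal{Y}$ (and not merely on the closed ball $\mathcal{D}$), so that uniqueness of the fixed point on $\mathcal{Y}$ forces $\phi(\lambda) = z_\lambda$ directly, without having to invoke Lemma~\ref{lem: cont fixed point}.
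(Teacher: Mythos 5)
Your proposal is correct, but it takes a genuinely different route from the paper. The paper proves the lemma by hand: it first establishes continuity of $\lambda\mapsto z_\lambda$ (Lemma~\ref{lem: cont fixed point}), then derives the first derivative directly from the fixed-point identity $z_{\lambda+h}-z_\lambda=\mathcal T_{\lambda+h}z_{\lambda+h}-\mathcal T_\lambda z_\lambda$, using the bound $\bigl\|\frac{\partial}{\partial z}\mathcal T_\lambda z_\lambda\bigr\|\le q<1$ (obtained exactly as in your Neumann-series step; see~\eqref{exp}) to solve for $z_{\lambda+h}-z_\lambda$, and finally runs an induction producing an explicit closed formula~\eqref{high-z-lambda} for the $i$-th derivative, including a separate computation of the derivative of $\bigl(\Id-\frac{\partial}{\partial z}\mathcal T_\lambda z_\lambda\bigr)^{-1}$. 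You replace all of this with a single application of the $C^k$ implicit function theorem to $F(\lambda,z)=z-\mathcal T_\lambda z$, which is legitimate in Banach spaces, and your identification of the local implicit solution with $z_\lambda$ is sound because~\eqref{ss} makes $\mathcal T_\lambda$ a contraction on all of $\mathcal Y$, so its fixed point is unique in $\mathcal Y$ and not merely in $\mathcal D$. Your argument is shorter, does not need Lemma~\ref{lem: cont fixed point} at all, and in fact only uses the $C^k$ hypothesis near the diagonal $\{(\lambda,z_\lambda)\}$ rather than on the full set $\mathcal S=\{(\lambda,z_{\bar\lambda})\}$ that the paper's hands-on computation requires (the paper evaluates derivatives at off-diagonal points such as $(\lambda+h,z_\lambda)$ and $(\lambda,z_{\lambda+h})$). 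What the paper's longer computation buys is self-containedness and the explicit recursive formula for $\frac{\partial^i}{\partial\lambda^i}z_\lambda$ in terms of the partial derivatives of $\mathcal T$; your approach yields the same regularity but leaves the derivative formulas implicit. Either way, this is essentially the uniform contraction principle, consistent with Remark~\ref{rem: L1 and L2 are general results}.
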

\begin{proof}

We utilize induction to prove that the $i$-th derivative of the map $\lambda \mapsto z_{\lambda}$ for $1\le i\le k$ is continuous and has the form
\begin{equation}\label{high-z-lambda}
\frac{\partial^i}{\partial \lambda^i}z_\lambda = \left(\Id -\frac{\partial}{\partial z}\mathcal T_\lambda z_{\lambda}\right)^{-1} {\mathcal F}(\lambda),
\end{equation}
where 
$${\mathcal F}(\lambda):= \sum\left( \frac{\partial^\ell}{\partial \lambda^{\ell-j}\partial z^j }\mathcal T_\lambda z_{\lambda} \right) \bigg(\frac{\partial^{\ell_1}}{\partial \lambda^{\ell_1}} z_{\lambda},\ldots,\frac{\partial^{\ell_j}}{\partial \lambda^{\ell_j}} z_{\lambda} \bigg),$$
where the sum is taken over $1\le\ell\le i$, $0\le j\le \ell$ and some nonnegative integers $\ell_1,\ldots,\ell_j$ satisfying $\ell_1+\ldots+\ell_j=i+j-\ell$.

We start by observing that, since $(\lambda,z)\mapsto \mathcal{T}_{\lambda}z$ is $C^1$ on the open set $\mathcal W$ containing ${\mathcal S}$, we have that 
\begin{align}\label{eq: derivative z lambda}
z_{\lambda+h}-z_\lambda&=\mathcal T_{\lambda+h} z_{\lambda+h}-\mathcal T_{\lambda} z_{\lambda} \nonumber\\
&= \mathcal T_{\lambda+h} z_{\lambda+h}-\mathcal T_{\lambda+h} z_{\lambda}+\mathcal T_{\lambda+h} z_{\lambda}-\mathcal T_{\lambda} z_{\lambda} 
\notag\\
&=\int_{0}^{1}\left(\frac{\partial}{\partial z }\mathcal T_{\lambda+h} (z_{\lambda}+\theta(z_{\lambda+h}-z_{\lambda}))\right)(z_{\lambda+h}-z_{\lambda}) d\theta
\nonumber\\
&\quad+\left(\frac{\partial}{\partial \lambda}\mathcal T_\lambda z_{\lambda}\right)h+ \mathbf{o}(h),
\end{align}
where $\mathbf{o}(h)$ means that  $\lim_{h\to 0}\frac{\|\mathbf{o}(h)\|_\infty}{|h|}=0$. Since the derivative of $z\mapsto \mathcal{T}_{\lambda}z$ is continuous at $(\lambda,z_{\lambda})$, it follows from \eqref{eq: derivative z lambda} and the conclusion of Lemma \ref{lem: cont fixed point} that
\begin{align}\label{after-MVT}
z_{\lambda+h}-z_\lambda&=\left(\frac{\partial}{\partial z }\mathcal T_{\lambda} z_{\lambda}+\alpha\right)(z_{\lambda+h}-z_{\lambda})
\nonumber\\
&\quad+\left(\frac{\partial}{\partial \lambda}\mathcal T_\lambda z_{\lambda}\right)h+ \mathbf{o}(h),
\end{align}
where $\alpha$ is a linear operator such that  $\|\alpha\|\to 0$ as $|h|\to 0$. Moreover, by \eqref{eq: cont z lambda}, given $\varepsilon>0$, whenever $|h|$ is sufficiently small we have that
\begin{equation*}
\begin{split}
\|z_{\lambda+h}-z_\lambda\|_{\infty}&\leq \frac{1}{1-q} \|\mathcal T_{\lambda+h} z_{\lambda}-\mathcal T_{\lambda} z_{\lambda}\|_{\infty}
\\
&\leq \frac{1}{1-q}  \left(\left\|\frac{\partial}{\partial \lambda }\mathcal T_\lambda z_{\lambda}\right\|+\varepsilon\right)|h|. 
\end{split}
\end{equation*}
In particular, when $|h|$ is  sufficiently small, we have that 
\begin{displaymath}
\begin{split}
\frac{\|\alpha(z_{\lambda+h}-z_{\lambda})\|_{\infty}}{|h|}&\le \frac{\|\alpha\|\|z_{\lambda+h}-z_{\lambda}\|_{\infty}}{|h|}\\
&\leq \frac{\|\alpha\|}{1-q} \left(\left\|\frac{\partial}{\partial \lambda } \mathcal T_\lambda z_{\lambda}\right\|+\varepsilon\right).
\end{split}
\end{displaymath}
Therefore, since $\|\alpha\|\to 0$ when $|h|\to 0$, it follows that $\alpha(z_{\lambda+h}-z_{\lambda})=\mathbf{o}(h)$. Plugging this information into \eqref{after-MVT} we obtain that
\begin{equation}\label{eq: est z diff}
\begin{split}
z_{\lambda+h}-z_\lambda&=\left(\frac{\partial}{\partial \lambda }\mathcal T_\lambda z_{\lambda}\right)h+\left(\frac{\partial}{\partial z}\mathcal T_\lambda z_{\lambda}\right)(z_{\lambda+h}-z_{\lambda})+\mathbf{o}(h).
\end{split}
\end{equation}
Now, observing that~\eqref{ss} implies 
\begin{equation}\label{exp}
\begin{split}
\left\|\frac{\partial}{\partial z}\mathcal T_\lambda z_\lambda\right\| &=\sup_{w\neq 0} \frac{\left\| \left (\frac{\partial}{\partial z}\mathcal T_\lambda z_\lambda \right )w\right\|_\infty}{\|w\|_\infty} \\
&=\sup_{w\neq 0} \lim_{t\to 0^+} \frac{\left\| \left (\frac{\partial}{\partial z}\mathcal T_\lambda z_\lambda \right )tw\right\|_\infty}{\|tw\|_\infty} \\
&\le \sup_{w\neq 0} \lim_{t\to 0^+} \frac{\left\| \mathcal T_\lambda (z_\lambda+tw)-\mathcal T_\lambda z_\lambda-\left (\frac{\partial}{\partial z}\mathcal T_\lambda z_\lambda \right )tw\right\|_\infty}{\|tw\|_\infty} \\
&\phantom{\le}+\sup_{w\neq 0} \lim_{t\to 0^+}\frac{\left\| \mathcal T_\lambda (z_\lambda+tw)-\mathcal T_\lambda z_\lambda \right\|_\infty}{\|tw\|_\infty}  \\
&\le q<1,
\end{split}
\end{equation}
it follows that the map $\Id -\frac{\partial}{\partial z}\mathcal T_\lambda z_\lambda$ is invertible. Consequently, using \eqref{eq: est z diff} we obtain that
\begin{equation*}
\begin{split}
z_{\lambda+h}-z_\lambda&=\left(\Id -\frac{\partial}{\partial z}\mathcal T_\lambda z_{\lambda}\right)^{-1}\left(\frac{\partial}{\partial \lambda }\mathcal T_\lambda z_{\lambda}\right)h+\mathbf{o}(h).
\end{split}
\end{equation*}
This proves that $\lambda \mapsto z_\lambda$ is differentiable and, moreover, that its derivative is given by
\begin{equation*}
\frac{\partial}{\partial \lambda}z_\lambda = \left(\Id -\frac{\partial}{\partial z}\mathcal T_\lambda z_{\lambda}\right)^{-1}\left(\frac{\partial}{\partial \lambda }\mathcal T_\lambda z_{\lambda}\right),
\end{equation*}
which is of the form in \eqref{high-z-lambda} with $i=1$.
Due to Lemma \ref{lem: cont fixed point} and the assumptions in Lemma \ref{lem: diff fixed point}, it is easy to see that the derivative map $\lambda\mapsto  \frac{\partial}{\partial \lambda}z_\lambda$ is continuous. Hence, it is proved that $\lambda\mapsto z_\lambda$ is $C^1$.

Assume that \eqref{high-z-lambda} holds and that  $\frac{\partial^i}{\partial\lambda^i}z_\lambda$ is continuous for all $1\le i\le m$, $1\le m\le k-1$.
Before proving  that~\eqref{high-z-lambda} holds for $i=m+1$ and that $\frac{\partial^{m+1}}{\partial\lambda^{m+1}}z_\lambda$ is continuous,
let us discuss the differentiability of $\left(\Id -\frac{\partial}{\partial z}\mathcal T_\lambda z_{\lambda}\right)^{-1}$. By our assumptions, for sufficiently small $h\in \Sigma$, we have that 
\begin{align*}
&\left(\Id -\frac{\partial}{\partial z}\mathcal T_{\lambda+h} z_{\lambda+h}\right)^{-1} - \left(\Id -\frac{\partial}{\partial z}\mathcal T_\lambda z_{\lambda+h}\right)^{-1}
\\
=& \left(\Id -\frac{\partial}{\partial z}\mathcal T_{\lambda+h} z_{\lambda+h}\right)^{-1} \bigg( \frac{\partial}{\partial z}\mathcal T_{\lambda+h} z_{\lambda+h} - \frac{\partial}{\partial z}\mathcal T_\lambda z_{\lambda+h} \bigg)
\\
&  \left(\Id -\frac{\partial}{\partial z}\mathcal T_\lambda z_{\lambda+h}\right)^{-1}
\\
=&
\left(\Id -\frac{\partial}{\partial z}\mathcal T_{\lambda+h} z_{\lambda+h}\right)^{-1} \bigg( \int_{0}^{1}\bigg( \frac{\partial^2}{\partial \lambda\partial z}\mathcal T_{\lambda+\theta h} z_{\lambda+h} \bigg)h d\theta \bigg)
\\
&  \left(\Id -\frac{\partial}{\partial z}\mathcal T_\lambda z_{\lambda+h}\right)^{-1}
\\
=&
\left(\Id -\frac{\partial}{\partial z}\mathcal T_{\lambda} z_{\lambda}\right)^{-1} \bigg( \bigg( \frac{\partial^2}{\partial \lambda\partial z}\mathcal T_\lambda z_{\lambda} \bigg)h  \bigg)
\left(\Id -\frac{\partial}{\partial z}\mathcal T_\lambda z_{\lambda}\right)^{-1} +\textbf{o}(h),
\end{align*}
and similarly 
\begin{align*}
&\left(\Id -\frac{\partial}{\partial z}\mathcal T_{\lambda} z_{\lambda+h}\right)^{-1} - \left(\Id -\frac{\partial}{\partial z}\mathcal T_\lambda z_{\lambda}\right)^{-1}
\\
=&
\left(\Id -\frac{\partial}{\partial z}\mathcal T_{\lambda} z_{\lambda}\right)^{-1} \bigg( \bigg( \frac{\partial^2}{\partial z^2}\mathcal T_\lambda z_{\lambda} \frac{\partial}{\partial\lambda}z_\lambda \bigg)h  \bigg)
\left(\Id -\frac{\partial}{\partial z}\mathcal T_\lambda z_{\lambda}\right)^{-1} +\textbf{o}(h).
\end{align*}
It follows that $\left(\Id -\frac{\partial}{\partial z}\mathcal T_\lambda z_{\lambda}\right)^{-1}$ is differentiable with respect to $\lambda$ and that its derivative is given by
\begin{align*}
&\frac{\partial}{\partial\lambda} \left(\Id -\frac{\partial}{\partial z}\mathcal T_\lambda z_{\lambda}\right)^{-1} h =\left(\Id -\frac{\partial}{\partial z}\mathcal T_\lambda z_{\lambda}\right)^{-1}
\\
&\qquad \bigg( \bigg( \frac{\partial^2}{\partial \lambda\partial z}\mathcal T_\lambda z_{\lambda}+\frac{\partial^2}{\partial z^2}\mathcal T_\lambda z_{\lambda} \frac{\partial}{\partial\lambda}z_\lambda \bigg)h  \bigg)
\left(\Id -\frac{\partial}{\partial z}\mathcal T_\lambda z_{\lambda}\right)^{-1},
\end{align*}
for $h\in \Sigma$.
It follows that  $\frac{\partial^m}{\partial\lambda^m}z_\lambda$ is differentiable. By left-multiplying \eqref{high-z-lambda} (with $i=m$) by ${\rm Id} - \frac{\partial}{\partial z}\mathcal T_\lambda z_{\lambda}$ and  differentiating both sides, we obtain that
\begin{align*}
&\bigg({\rm Id} - \frac{\partial}{\partial z}\mathcal T_\lambda z_{\lambda}  \bigg) \frac{\partial^{m+1}}{\partial\lambda^{m+1}} z_{\lambda} - \frac{\partial^2}{\partial \lambda\partial z}\mathcal T_\lambda z_{\lambda} \frac{\partial^{m}}{\partial\lambda^{m}} z_{\lambda}
\\
&\qquad
-\frac{\partial^2}{\partial z^2}\mathcal T_\lambda z_{\lambda}\bigg(\frac{\partial}{\partial\lambda} z_{\lambda},\frac{\partial^{m}}{\partial\lambda^{m}} z_{\lambda} \bigg) 
= \frac{\partial}{\partial\lambda} {\mathcal F}(\lambda),
\end{align*}
which gives that 
\begin{align*}
&\frac{\partial^{m+1}}{\partial\lambda^{m+1}} z_{\lambda} = 
\bigg({\rm Id} - \frac{\partial}{\partial z}\mathcal T_\lambda z_{\lambda}  \bigg)^{-1}
\\
& 
\bigg(\frac{\partial}{\partial\lambda} {\mathcal F}(\lambda) + \frac{\partial^2}{\partial \lambda\partial z}\mathcal T_\lambda z_{\lambda} \frac{\partial^{m}}{\partial\lambda^{m}} z_{\lambda}
+\frac{\partial^2}{\partial z^2}\mathcal T_\lambda z_{\lambda}\bigg(\frac{\partial}{\partial\lambda} z_{\lambda},\frac{\partial^{m}}{\partial\lambda^{m}} z_{\lambda} \bigg)  \bigg).
\end{align*}
 Clearly, $\frac{\partial^{m+1}}{\partial\lambda^{m+1}} z_{\lambda}$ is of the form \eqref{high-z-lambda} and is continuous according to the inductive assumption and $C^k$ regularity  of  the map $(\lambda,z)\mapsto \mathcal{T}_{\lambda}z$. By induction,  \eqref{high-z-lambda} is proved and $\frac{\partial^i}{\partial\lambda^i}z_\lambda$ is continuous for all $1\le i\le k$. Therefore,  the map $\lambda \to z_\lambda$ is $C^k$. The proof of the lemma  is completed.
\end{proof} 

\begin{remark}\label{rem: L1 and L2 are general results}
Observe that Lemmas \ref{lem: cont fixed point} and \ref{lem: diff fixed point} are general results about fixed points of contractions on Banach spaces. In fact, in the proof of these results we have only exploited the fact that $z_\lambda \in \mathcal D$ is a fixed point of a contraction map $\mathcal T_\lambda$ acting on a Banach space and not the particular form of the operator $\mathcal{T}_\lambda$. In particular, these results hold true for any map $\lambda \to z_\lambda$ where $z_\lambda$ is a fixed point of a contraction $\mathcal{T}_\lambda$ on a Banach space.
\end{remark}

In order to conclude the proof of Theorem \ref{thm-diff} what remains to be done is to show that the hypothesis given in its statement imply that the assumptions of Lemma \ref{lem: diff fixed point} are satisfied. This is accomplished  in Lemmas \ref{lem: diff T z} and \ref{lem-diff T lambda} below. 
\begin{lemma} \label{lem: diff T z}
Suppose that the assumptions of Theorem \ref{thm-diff} hold. Then, $\mathcal{Y}\ni z\mapsto \mathcal{T}_{\lambda}z$ is $C^k$ on an open set $\mathcal W$ containing the set ${\mathcal S}$, where ${\mathcal S}$ is defined in Lemma \ref{lem: diff fixed point}. 
\end{lemma}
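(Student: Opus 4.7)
The plan is to identify, for each $1\le m\le k$, the candidate $m$-th partial derivative of $\mathcal T_\lambda z$ with respect to $z$. Since the dependence of $\mathcal T_\lambda z$ on $z$ in \eqref{op} enters only through the composition $f_\lambda(s, y_\lambda(s)+z(s))$, formal differentiation under the integral sign suggests the $m$-linear expression
\begin{equation*}
\frac{\partial^m}{\partial z^m}\mathcal T_\lambda z\,(w_1, \ldots, w_m)(t) = \int_{-\infty}^\infty \mathcal G(t, s)\, \frac{\partial^m}{\partial x^m} f_\lambda\bigl(s, y_\lambda(s)+z(s)\bigr)\bigl(w_1(s), \ldots, w_m(s)\bigr) \, ds.
\end{equation*}
I would take $\mathcal W := \Sigma \times \mathcal Y$, which is open and trivially contains $\mathcal S$.

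To show the above is a well-defined bounded multilinear map from $\mathcal Y^m$ to $\mathcal Y$, I would treat $m=1$ separately, using \eqref{lip} to deduce $\bigl\|\frac{\partial}{\partial x} f_\lambda(t,x)\bigr\|\le c(t)$ and then \eqref{contr} to obtain operator norm bounded by $q$. For $2\le m\le k$, \eqref{bounded-deriva-f} with $i=j=m$ gives $\bigl\|\frac{\partial^m}{\partial x^m}f_\lambda(t,x)\bigr\|\le C\epsilon(t)$ uniformly in $(\lambda, x)$, and the operator norm is bounded by $CL$ via \eqref{L}.

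Next, I would proceed by induction on $m$ to verify that the candidate is indeed the $m$-th Fréchet partial derivative. The inductive step reduces to showing that, for any $(w_1, \ldots, w_{m-1})$ with $\|w_j\|_\infty\le 1$,
\begin{equation*}
\sup_t \left|\int_{-\infty}^\infty \mathcal G(t, s)\, R(s,\eta)\bigl(w_1(s),\ldots, w_{m-1}(s)\bigr)\, ds\right| = o(\|\eta\|_\infty),
\end{equation*}
where $R(s,\eta)$ denotes the first-order Taylor remainder of $u\mapsto \frac{\partial^{m-1}}{\partial x^{m-1}}f_\lambda(s, u)$ at $u=y_\lambda(s)+z(s)$ in the increment $\eta(s)$. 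Taylor's theorem with integral remainder together with the bound $\bigl\|\frac{\partial^{m+1}}{\partial x^{m+1}}f_\lambda\bigr\|\le C\epsilon(s)$ from \eqref{bounded-deriva-f} (valid since $m+1\le k+1$) gives the pointwise estimate $\|R(s,\eta)\|\le \tfrac{C}{2}\epsilon(s)\|\eta\|_\infty^2$, so the integral is bounded by $\tfrac{CL}{2}\|\eta\|_\infty^2$ uniformly in $t$ and in $(w_j)$ via \eqref{L}. This both establishes Fréchet differentiability of the $(m-1)$-th derivative and identifies its derivative with our candidate.

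The main obstacle is establishing joint continuity of $(\lambda, z)\mapsto \frac{\partial^m}{\partial z^m}\mathcal T_\lambda z$ in the operator norm of $\mathcal L(\mathcal Y^m;\mathcal Y)$. Given $(\lambda_n, z_n)\to (\lambda_0, z_0)$ in $\Sigma\times \mathcal Y$ with $\lambda_n$ eventually in the neighborhood $\Sigma_0$ from \eqref{condi-bound-deri-y-lambda}, the mean value theorem together with the uniform bounds $\bigl\|\frac{\partial^{m+1}}{\partial x^{m+1}}f_\lambda\bigr\|, \bigl\|\frac{\partial^{m+1}}{\partial\lambda\,\partial x^m}f_\lambda\bigr\|\le C\epsilon(s)$ supplied by \eqref{bounded-deriva-f} yields
\begin{equation*}
\left\|\tfrac{\partial^m}{\partial x^m} f_{\lambda_n}(s, y_{\lambda_n}(s)+z_n(s)) - \tfrac{\partial^m}{\partial x^m} f_{\lambda_0}(s, y_{\lambda_0}(s)+z_0(s))\right\|\le 2C\epsilon(s)\,\delta_n(s),
\end{equation*}
where $\delta_n(s) := |\lambda_n-\lambda_0| + |y_{\lambda_n}(s)-y_{\lambda_0}(s)| + \|z_n-z_0\|_\infty$. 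The crucial uniformity in $s$ for the middle term comes from \eqref{condi-bound-deri-y-lambda}, which via the mean value theorem in $\lambda$ forces $\sup_s|y_{\lambda_n}(s)-y_{\lambda_0}(s)|\le M_{\lambda_0}|\lambda_n-\lambda_0|$, so $\sup_s\delta_n(s)\to 0$. Multiplying by $\|\mathcal G(t,s)\|$, integrating, taking $\sup_t$, and invoking \eqref{L} then delivers convergence in the operator norm. The case $m=1$ is handled analogously, using \eqref{lip} for the base-level bound and \eqref{bounded-deriva-f} with $i=2$ for the first-order differences.
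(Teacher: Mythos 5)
Your proposal is correct and follows essentially the same route as the paper: the same candidate formula for $\frac{\partial^m}{\partial z^m}\mathcal T_\lambda z$ obtained by differentiating under the integral, the same Taylor-remainder bound of order $C\epsilon(s)\|\eta\|_\infty^2$ via \eqref{bounded-deriva-f}, and the same splitting of the joint-continuity estimate into a $\lambda$-increment and a $y_\lambda$-increment controlled through \eqref{condi-bound-deri-y-lambda}. The only (harmless) difference is that you work globally on $\mathcal W=\Sigma\times\mathcal Y$, whereas the paper argues on a neighborhood of each point of $\mathcal S$.
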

\begin{proof} It suffices to prove that,  for any $(\lambda_0, z_{\bar\lambda_0})\in {\mathcal S}$, there exists an open set $\mathcal N\ni (\lambda_0, z_{\bar\lambda_0})$ on which the map $z\mapsto \mathcal{T}_{\lambda}z$ is $C^k$.
We use induction to prove that on $\mathcal N$, the $i$-th derivative of $z\mapsto \mathcal{T}_{ \lambda}z$ is given by
\begin{align}\label{high-deriva}
&\left(\left(\frac{\partial^i}{\partial z^i} \mathcal{T}_{\lambda} z \right) (\eta_1,\ldots,\eta_i)\right)(t)
\notag\\
=&\int_{-\infty}^\infty \mathcal G(t, \tau) \left(\frac{\partial^i}{\partial x^i} f_{\lambda}(\tau, y_{\lambda}(\tau) + z (\tau))\right)(\eta_1 (\tau),\ldots,\eta_i(\tau))   d\tau
\end{align} 
for $1\le i\le k$, where $\eta_j\in \mathcal Y$ for $1\le j\le i$.  
By arguing as in~\eqref{exp}, it follows from~\eqref{lip} that 
\begin{equation}\label{eq: est deriv f lambda}
\begin{split}
\sup_{\lambda\in \Sigma, x\in X}\left\|\frac{\partial}{\partial x} f_\lambda (t,x)\right\|\le c(t).
\end{split}
\end{equation}
We first show  that \eqref{high-deriva}  holds at the point $(\lambda_0, z_{\bar\lambda_0})$.
Using the definition of $\mathcal T_\lambda$ and the first assumption of Theorem \ref{thm-diff}, we get that for $\eta \in \mathcal Y$ with sufficiently small $\|\eta\|_\infty$,
\begin{align}\label{eq: deriv of T lambda z}
&\left( \mathcal T_{\lambda_0}(z_{\bar \lambda_0}+\eta) \right) (t)-\left(\mathcal T_{\lambda_0}z_{\bar \lambda_0}\right) (t)
\notag
\\
&=\int_{-\infty}^\infty \mathcal G(t, \tau)\bigg\{\int_{0}^{1}\left(\frac{\partial}{\partial x} f_{\lambda_0}(\tau, y_{\lambda_0}(\tau) + z_{\bar\lambda_0} (\tau)+\theta\eta (\tau) )\right)\eta (\tau) d\theta  \bigg\}  d\tau  
\notag
\\
&=\int_{-\infty}^\infty \mathcal G(t, \tau) \left(\frac{\partial}{\partial x} f_{\lambda_0}(\tau, y_{\lambda_0}(\tau) + z_{\bar\lambda_0} (\tau))\right)\eta (\tau)   d\tau 
 \nonumber\\
&\phantom{=}+\int_{-\infty}^\infty \mathcal G(t, \tau)\alpha_1(\tau) d\tau,
\end{align}
where 
\begin{align*}
\alpha_1(\tau)&=\int_{0}^{1}\bigg( \frac{\partial}{\partial x} f_{\lambda_0}(\tau, y_{\lambda_0}(\tau) + z_{\bar\lambda_0} (\tau)+\theta\eta (\tau) ) \bigg)\eta(\tau)d\theta
\\
&\quad-\bigg( \frac{\partial}{\partial x} f_{\lambda_0}(\tau, y_{\lambda_0}(\tau) + z_{\bar\lambda_0} (\tau))\bigg) \eta(\tau).
\end{align*}
By the first assumption of Theorem \ref{thm-diff} again, $|\alpha_1(\tau)|$ can be estimated as follows: 
\begin{align}\label{alpha1-esti}
&|\alpha_1(\tau)|
\le \int_{0}^{1}\bigg\| \frac{\partial}{\partial x} f_{\lambda_0}(\tau, y_{\lambda_0}(\tau) + z_{\bar\lambda_0} (\tau)+\theta\eta (\tau) )
\notag
\\
&\quad- \frac{\partial}{\partial x} f_{\lambda_0}(\tau, y_{\lambda_0}(\tau) + z_{\bar\lambda_0} (\tau))  \bigg\||\eta(\tau)|d\theta
\notag\\
&\le \int_{0}^{1}\bigg( \int_{0}^{1}\bigg\|\frac{\partial^2}{\partial x^2} f_{\lambda_0}(\tau, y_{\lambda_0}(\tau) + z_{\bar\lambda_0} (\tau)+\nu\theta\eta (\tau) )\bigg\|  |\eta(\tau)|d\nu \bigg) |\eta(\tau)| d\theta
\notag\\
&
\le C\epsilon(\tau)|\eta(\tau)|^2.
\end{align}
Due to \eqref{contr}, \eqref{L},
\eqref{eq: est deriv f lambda} and \eqref{alpha1-esti}, all the integrals above converge. 
In addition, by \eqref{L} and \eqref{alpha1-esti}, we have that 
\begin{align*}
\frac{1}{\|\eta\|_{\infty}}\left |\int_{-\infty}^\infty \mathcal G(t, \tau)\alpha_1(\tau)   d\tau \right |  \le C\int_{-\infty}^\infty \|\mathcal G(t, \tau)\|\epsilon(\tau)|\eta (\tau)|d\tau \le CL\| \eta\|_\infty,
\end{align*}
for every $t\in\mathbb R$, and consequently 
\begin{equation*}
\frac{1}{\|\eta\|_{\infty}}\sup_{t\in\mathbb{R}}\left | \int_{-\infty}^\infty \mathcal G(t, \tau)\alpha_1(\tau)  d\tau \right | \xrightarrow{\|\eta\|_{\infty}\to 0} 0. 
\end{equation*}
This fact combined with \eqref{eq: deriv of T lambda z} implies that $z\mapsto  \mathcal{T}_{\lambda_0}z$ is differentiable at $z_{\bar\lambda_0}$ and that its derivative is given by
\begin{equation}\label{first-deriv-form}
\left(\left(\frac{\partial}{\partial z} \mathcal{T}_{\lambda_0}z_{\bar\lambda_0} \right)\eta \right) (t)  =\!\!\int_{-\infty}^\infty \mathcal G(t, \tau) \left(\frac{\partial}{\partial x} f_{\lambda_0}(\tau, y_{\lambda_0}(\tau) + z_{\bar\lambda_0} (\tau))\right)\eta (\tau)   d\tau,
\end{equation}
for $t\in \R$ and $\eta \in \mathcal Y$.

Similarly, one can show that $z\mapsto  \mathcal{T}_{\lambda}z$ is also differentiable at every point  in the neighborhood $\mathcal N$ of $(\lambda_0, z_{\bar\lambda_0})$ and that  its derivative has the same form as in \eqref{first-deriv-form}.

Assume that \eqref{high-deriva} holds for $i=j$ with $1\le j\le k-1$. Then, by the inductive assumption and the first hypothesis of Theorem \ref{thm-diff},  for $\eta\in \mathcal Y$ with $\|\eta\|_\infty$ sufficiently small,  we have  that 
\begin{align}\label{induc-estima}
&\left( \frac{\partial^j}{\partial z^j}\mathcal T_{\lambda_0}(z_{\bar \lambda_0}+\eta) (\eta_1,\ldots,\eta_j)\right) (t)-\left(\frac{\partial^j}{\partial z^j}\mathcal T_{\lambda_0} z_{\bar \lambda_0}(\eta_1,\ldots,\eta_j)\right) (t)
\notag\\
&=\int_{-\infty}^{\infty} \mathcal G(t, \tau) \left(\frac{\partial^j}{\partial x^j} f_{\lambda_0}(\tau, y_{\lambda_0}(\tau) + z_{\bar\lambda_0}(\tau) +\eta(\tau) )\right)(\eta_1 (\tau),\ldots,\eta_j(\tau))  d\tau
\notag\\
&\qquad -\int_{-\infty}^{\infty} \mathcal G(t, \tau) \left(\frac{\partial^j}{\partial x^j} f_{\lambda_0}(\tau, y_{\lambda_0}(\tau) + z_{\bar\lambda_0}(\tau) )\right) (\eta_1 (\tau),\ldots,\eta_j(\tau))  d\tau
\notag
\\
&= \int_{-\infty}^{\infty} \mathcal G(t, \tau) \bigg\{\int_{0}^{1}\left(\frac{\partial^{j+1}}{\partial x^{j+1}} f_{\lambda_0}(\tau, y_{\lambda_0}(\tau) + z_{\bar\lambda_0}(\tau) + \theta \eta(\tau) ) \right)\eta(\tau)d\theta \bigg\}
\notag
\\
&\qquad (\eta_1 (\tau),\ldots,\eta_j(\tau)) d\tau
\notag\\
&= \int_{-\infty}^{\infty} \mathcal G(t, \tau) \left\{\bigg(\frac{\partial^{j+1}}{\partial x^{j+1}} f_{\lambda_0}(\tau, y_{\lambda_0}(\tau) + z_{\bar\lambda_0}(\tau) )\bigg)\eta(\tau)\right\} (\eta_1 (\tau),\ldots,\eta_j(\tau))  d\tau
\notag\\
&\qquad+\int_{-\infty}^{\infty} \mathcal G(t, \tau) \tilde \alpha (\tau) (\eta_1 (\tau),\ldots,\eta_j(\tau))  d\tau,
\end{align}
where $\eta_\ell\in \mathcal Y$ for all $1\le\ell\le j$ and 
\begin{align*}
\tilde \alpha(\tau) &= \int_{0}^{1}\left(\frac{\partial^{j+1}}{\partial x^{j+1}} f_{\lambda_0}(\tau, y_{\lambda_0}(\tau) + z_{\bar\lambda_0}(\tau) + \theta \eta(\tau)) \right)\eta(\tau)d\theta 
\\
&\quad- \bigg(\frac{\partial^{j+1}}{\partial x^{j+1}} f_{\lambda_0}(\tau, y_{\lambda_0}(\tau) + z_{\bar\lambda_0}(\tau) )\bigg)\eta(\tau).
\end{align*}
According to the first assumption of Theorem \ref{thm-diff},
\begin{align*}
\|\tilde \alpha(\tau) \|
&\le \int_{0}^{1} \bigg\|\frac{\partial^{j+1}}{\partial x^{j+1}} f_{\lambda_0}(\tau, y_{\lambda_0}(\tau) + z_{\bar\lambda_0}(\tau) + \theta \eta(\tau) )
\\
&\quad- \frac{\partial^{j+1}}{\partial x^{j+1}} f_{\lambda_0}(\tau, y_{\lambda_0}(\tau) + z_{\bar\lambda_0}(\tau) )\bigg\| |\eta(\tau)| d\theta
\\
&\le\int_{0}^{1}\! \bigg( \int_{0}^{1} \!\bigg\|\frac{\partial^{j+2}}{\partial x^{j+2}} f_{\lambda_0}(\tau, y_{\lambda_0}(\tau) + z_{\bar\lambda_0}(\tau) + \nu\theta \eta(\tau) )\bigg\||\eta(\tau)|d\nu \bigg)|\eta(\tau)| d\theta
\\
&\le C\epsilon(\tau)|\eta(\tau)|^2.
\end{align*}
Then, we 
have that 
\begin{align*}
\frac{1}{\|\eta\|_\infty\prod_{\ell=1}^j\|\eta_\ell\|_\infty}\sup_{t\in \R} \left |\int_{-\infty}^{\infty} \mathcal G(t, \tau) \tilde \alpha (\tau) (\eta_1 (\tau),\ldots,\eta_j(\tau))  d\tau\right |
\le CL\| \eta\|_\infty.
\end{align*}
This together with \eqref{induc-estima} implies that  \eqref{high-deriva} holds at point $(\lambda_0,z_{\bar\lambda_0})$ for $i=j+1$.

Similarly, one can show that $z\mapsto  \mathcal{T}_{\lambda}z$ is also differentiable of order $j+1$ at every point in the neighborhood $\mathcal N$ of $(\lambda_0, z_{\bar\lambda_0})$ and that the $(j+1)$-th derivative has the same form as in \eqref{high-deriva}. Thus, by induction, \eqref{high-deriva} is proved.

Now, we show that  $\frac{\partial^i}{\partial z^i} \mathcal{T}$ is continuous on $\mathcal N$ for all $1\le i\le k$. Without loss of generality, we only show the continuity at $(\lambda_0, z_{\bar\lambda_0})$. In fact, fixing any $1\le i\le k$,
using the first assumption of Theorem \ref{thm-diff},  for every $\eta\in {\mathcal Y},\mu\in\Sigma$ sufficiently small, by \eqref{L},  \eqref{bounded-deriva-f} and \eqref{condi-bound-deri-y-lambda} we have  
\begin{align*}
 &\left\| \frac{\partial^i}{\partial z^i} {\mathcal T}_{\lambda_0}(z_{\bar\lambda_0}+\eta) - \frac{\partial^i}{\partial z^i} {\mathcal T}_{\lambda_0}z_{\bar\lambda_0} \right\| 
=
\sup_{\substack{0\ne\eta_j\in \mathcal Y\\ 1\le j\le i}}\frac{1}{\prod_{j=1}^i\|\eta_j\|_\infty}\cdot   
\notag\\
&\qquad\sup_{t\in \R} \left |\int_{-\infty}^\infty {\mathcal G}(t,\tau) \left( \frac{\partial^i}{\partial x^i} f_{\lambda_0}(\tau, y_{\lambda_0}(\tau) + z_{\bar\lambda_0}(\tau)+\eta(\tau))\right. \right.
\notag\\
&\qquad \left.\left.-
\frac{\partial^i}{\partial x^i} f_{\lambda_0}(\tau, y_{\lambda_0}(\tau) + z_{\bar\lambda_0}(\tau))\right)(\eta_1(\tau),\ldots,\eta_i(\tau)) d\tau\right | 
\notag\\
\le &
\sup_{t\in \R} \int_{-\infty}^\infty \|{\mathcal G}(t,\tau)\| \bigg\|\frac{\partial^i}{\partial x^i} f_{\lambda_0}(\tau, y_{\lambda_0}(\tau) + z_{\bar\lambda_0}(\tau)+\eta(\tau))
\notag\\
&\qquad -
\frac{\partial^i}{\partial x^i} f_{\lambda_0}(\tau, y_{\lambda_0}(\tau) + z_{\bar\lambda_0}(\tau))\bigg\| d\tau
\notag\\
\le &C\sup_{t\in\R}\int_{-\infty}^\infty\|{\mathcal G}(t,\tau)\| \epsilon(\tau) |\eta(\tau)| d\tau 
\le  CL \|\eta\|_\infty.
\end{align*}
and
\begin{align}
&\left\| \frac{\partial^i}{\partial z^i} {\mathcal T}_{\lambda_0+\mu}z_{\bar\lambda_0} - \frac{\partial^i}{\partial z^i} {\mathcal T}_{\lambda_0}z_{\bar\lambda_0} \right\| 
=
\sup_{\substack{0\ne\eta_j\in \mathcal Y\\ 1\le j\le i}}\frac{1}{\prod_{j=1}^i\|\eta_j\|_\infty}\cdot
\notag\\
&\qquad\sup_{t\in \R} \left |\int_{-\infty}^\infty {\mathcal G}(t,\tau) \left( \frac{\partial^i}{\partial x^i} f_{\lambda_0+\mu}(\tau, y_{\lambda_0+\mu}(\tau) + z_{\bar\lambda_0}(\tau))\right. \right.
\notag\\
&\qquad \left.\left.-
\frac{\partial^i}{\partial x^i} f_{\lambda_0}(\tau, y_{\lambda_0}(\tau) + z_{\bar\lambda_0}(\tau))\right)(\eta_1(\tau),\ldots,\eta_i(\tau)) d\tau\right | 
\notag\\
\le &
\sup_{t\in \R} \int_{-\infty}^\infty \|{\mathcal G}(t,\tau) \|\cdot
\notag\\
&\bigg(\bigg\|\frac{\partial^i}{\partial x^i} f_{\lambda_0+\mu}(\tau, y_{\lambda_0+\mu}(\tau) + z_{\bar\lambda_0}(\tau))
-
\frac{\partial^i}{\partial x^i} f_{\lambda_0}(\tau, y_{\lambda_0+\mu}(\tau) + z_{\bar\lambda_0}(\tau))\bigg\| 
\notag
\\
&+\bigg\| \frac{\partial^i}{\partial x^i} f_{\lambda_0}(\tau, y_{\lambda_0+\mu}(\tau) + z_{\bar\lambda_0}(\tau))
-
\frac{\partial^i}{\partial x^i} f_{\lambda_0}(\tau, y_{\lambda_0}(\tau) + z_{\bar\lambda_0}(\tau))\bigg\|\bigg) d\tau
\notag\\
\le &C\sup_{t\in\R}\int_{-\infty}^\infty\|{\mathcal G}(t,\tau)\| \epsilon(\tau) (|\mu|+M_{\lambda_0}|\mu|)  d\tau 
\notag\\
\le & CL (M_{\lambda_0}+1)|\mu|.
\notag
\end{align}
Hence,  it follows that $\frac{\partial^i}{\partial z^i} \mathcal{T}$ is continuous at $(\lambda_0,z_{\bar\lambda_0})$ and it is proved that the map $z\mapsto {\mathcal T}_\lambda z$ is $C^k$ on $\mathcal N$. 
This completes the proof of the lemma.
\end{proof}

\begin{lemma}\label{lem-diff T lambda}
	Suppose we are in the hypotheses of Theorem \ref{thm-diff}. Then, $\Sigma\ni \lambda\mapsto \mathcal{T}_{\lambda}z$ is $C^k$ on an open set $\mathcal W$ containing the set ${\mathcal S}$, where ${\mathcal S}$ is defined in Lemma \ref{lem: diff fixed point}.
\end{lemma}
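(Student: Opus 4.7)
The plan is to mirror the template of Lemma \ref{lem: diff T z}: fix an arbitrary $(\lambda_0, z_{\bar\lambda_0}) \in \mathcal S$, choose a bounded neighborhood $\mathcal N$ of this point in $\Sigma \times \mathcal Y$ on which \eqref{condi-bound-deri-y-lambda} gives a uniform bound $M_{\lambda_0}$ on all $\lambda$-derivatives of $y_\lambda$ up to order $k+1$, and prove by induction on $1 \le i \le k$ that the partial derivative $\frac{\partial^i}{\partial \lambda^i} \mathcal T_\lambda z$ exists on $\mathcal N$ and is jointly continuous in $(\lambda, z)$. The open set $\mathcal W$ is then the union of such neighborhoods as $(\lambda_0, \bar\lambda_0)$ varies over $\Sigma \times \Sigma$.

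The genuinely new difficulty, compared with Lemma \ref{lem: diff T z}, is that the integrand in
\[
(\mathcal T_\lambda z)(t) = \int_{-\infty}^\infty \mathcal G(t,s)\bigl[A(s) y_\lambda(s) + f_\lambda(s, y_\lambda(s) + z(s)) - y_\lambda'(s)\bigr]\, ds
\]
contains the piece $A(s) y_\lambda(s) - y_\lambda'(s)$, which is not individually controlled by $\epsilon(s)$ or $c(s)$. The remedy --- and this is exactly what makes \eqref{pse-derivatives} together with \eqref{bounded-deriva-f} the right hypotheses --- is to add and subtract $\tfrac{d^i}{d\lambda^i}f_\lambda(s, y_\lambda(s))$ at each step of the induction. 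Differentiating $i$ times in $\lambda$ formally produces an integrand of the form
\[
\bigl[A(s)\tfrac{\partial^i y_\lambda}{\partial \lambda^i}(s) + \tfrac{d^i}{d\lambda^i}f_\lambda(s, y_\lambda(s)) - \tfrac{\partial^i y_\lambda'}{\partial \lambda^i}(s)\bigr] + \bigl[\tfrac{d^i}{d\lambda^i}f_\lambda(s, y_\lambda(s)+z(s)) - \tfrac{d^i}{d\lambda^i}f_\lambda(s, y_\lambda(s))\bigr],
\]
whose first bracket is bounded by $C\epsilon(s)$ by \eqref{pse-derivatives}. The second bracket, once expanded via the chain rule, is a sum of terms of the form $(\tfrac{\partial^\ell}{\partial \lambda^{\ell-j}\partial x^j} f_\lambda)$ evaluated at $(s, y_\lambda(s)+z(s))$ versus $(s, y_\lambda(s))$, multiplied by products of $\lambda$-derivatives of $y_\lambda$ of total order $i+j-\ell$. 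Each such term admits a mean value estimate bounded by $C\epsilon(s)\cdot |z(s)| \cdot P(M_{\lambda_0})$ by using \eqref{bounded-deriva-f} at the corresponding order $\ell+1 \le k+1$ together with \eqref{condi-bound-deri-y-lambda}. Therefore the candidate integral converges absolutely and uniformly in $t$ thanks to \eqref{L}.

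With this a priori estimate in hand, the base case $i=1$ and the inductive step are carried out exactly as in Lemma \ref{lem: diff T z}: one writes the finite difference $\mathcal T_{\lambda+h} z - \mathcal T_\lambda z$ via Taylor's theorem with integral remainder, identifies the linear part with the candidate integral computed above, and bounds the remainder by one extra derivative of $f_\lambda$ (permissible since $f_\lambda$ is $C^{k+1}$), yielding an $\mathbf o(|h|)$ estimate in $\| \cdot \|_\infty$ via the same dominated-convergence mechanism. Continuity of $\frac{\partial^i}{\partial \lambda^i} \mathcal T_\lambda z$ on $\mathcal N$ is analogous to the final estimates in the proof of Lemma \ref{lem: diff T z}: perturbing $(\lambda, z)$ changes the integrand by a quantity bounded by $C\epsilon(s)$ times a continuous function of the perturbation that tends to zero, and \eqref{L} upgrades this to a uniform-in-$t$ estimate. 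The principal obstacle I anticipate is the combinatorial bookkeeping for the chain-rule expansion of $\tfrac{d^i}{d\lambda^i} f_\lambda(s, y_\lambda(s)+z(s))$ at higher orders: every term produced must be matched either with the \eqref{pse-derivatives} bracket or with a mean value difference that produces the crucial extra factor $|z(s)|$. Beyond this accounting, no new analytic ingredient beyond those already exploited in Lemma \ref{lem: diff T z} is required.
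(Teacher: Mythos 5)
Your proposal is correct and follows essentially the same route as the paper: the decomposition obtained by adding and subtracting $\tfrac{d^i}{d\lambda^i}f_\lambda(s,y_\lambda(s))$ is exactly the paper's splitting of the integrand into $\mathcal H$ (controlled by \eqref{pse-derivatives}) plus a difference term, and your chain-rule/mean-value estimate producing the factor $C\epsilon(s)\,|z(s)|\,P(M_{\lambda_0})$ is precisely the content of the paper's auxiliary Lemma~\ref{estimate-high-deriva} (with $P(M_{\lambda_0})$ playing the role of $NM_*^i$). The induction, the Taylor-with-integral-remainder argument for the $\mathbf o(|h|)$ estimate, and the continuity step all match the paper's proof.
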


In order to prove Lemma \ref{lem-diff T lambda}, we need the following auxiliary result.

\begin{lemma}\label{estimate-high-deriva}
Fix $\lambda_*\in \Sigma$ and $2\le i\le k+1$ and suppose that conditions \eqref{bounded-deriva-f} and \eqref{condi-bound-deri-y-lambda} are satisfied. Then, there exists $N>0$ such that for every $\mu\in \Sigma$ with sufficiently small $|\mu|$ and for all $  \tau\in\mathbb R$,
\begin{equation}\label{high-deri-estim}
 \bigg\|\bigg\{\frac{d^i}{d\lambda^i} f_\lambda(\tau, y_\lambda(\tau) + z (\tau)) 
- \frac{d^i}{d\lambda^i} f_\lambda(\tau, y_\lambda(\tau) )\bigg\}\bigg|_{\lambda_* +\mu }\bigg\|
\le C\epsilon(\tau)NM_{*}^i\|z\|_\infty,
\end{equation}
where $z\in \mathcal{Y}$ and $M_*>1$ is a constant such that 
\begin{equation}\label{bound-deriv-y-lambda}
    \max_{1\le j\le k+1} \sup_{t\in\mathbb R}\bigg\|\frac{\partial^j}{\partial\lambda^j}y_{\lambda_*+\mu}(t)\bigg\|\le M_*
\end{equation}
for every $\mu \in \Sigma$ with sufficiently small $|\mu|$ whose existence is given by \eqref{condi-bound-deri-y-lambda}.
\end{lemma}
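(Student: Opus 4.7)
The plan is to differentiate the composition $\lambda \mapsto f_\lambda(\tau, y_\lambda(\tau) + z(\tau))$ using Faà di Bruno's formula, setting $\phi(\lambda, x) := f_\lambda(\tau, x)$ and $u(\lambda) := y_\lambda(\tau) + z(\tau)$. The key structural observation is that, because $z(\tau)$ does not depend on $\lambda$, the $p$-th derivative $\frac{d^p u}{d\lambda^p}$ equals $\frac{\partial^p}{\partial\lambda^p} y_\lambda(\tau)$ for every $p \ge 1$, the same as for the analogous composition built from $v(\lambda) := y_\lambda(\tau)$. Consequently, when I expand both $\frac{d^i}{d\lambda^i}\phi(\lambda, u(\lambda))$ and $\frac{d^i}{d\lambda^i}\phi(\lambda, v(\lambda))$ via Faà di Bruno and subtract, all the products of higher $\lambda$-derivatives of $y_\lambda(\tau)$ match term by term. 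What survives is a finite sum of the form
\[
\sum_{\vec m} c_{\vec m}\left[\frac{\partial^{|\vec m|}}{\partial\lambda^{m_0}\partial x^{|\vec m|-m_0}} f_\lambda(\tau, y_\lambda+z) - \frac{\partial^{|\vec m|}}{\partial\lambda^{m_0}\partial x^{|\vec m|-m_0}} f_\lambda(\tau, y_\lambda)\right]\prod_{p\ge 1}\left(\frac{\partial^p}{\partial\lambda^p} y_\lambda(\tau)\right)^{m_p},
\]
where the multi-index $\vec m = (m_0, m_1, \ldots, m_i)$ ranges over nonnegative integer tuples with $m_0 + \sum_p p\, m_p = i$, and $|\vec m| = m_0 + m_1 + \cdots + m_i \le i$.

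Bounding each factor is then mechanical. The product $\prod_{p\ge 1}(\partial_\lambda^p y_\lambda(\tau))^{m_p}$ is controlled by \eqref{bound-deriv-y-lambda}, giving operator norm at most $M_*^{\sum_p m_p} \le M_*^i$. For the bracketed difference I invoke the fundamental theorem of calculus in the $x$-slot,
\[
\left\|\frac{\partial^{|\vec m|}}{\partial\lambda^{m_0}\partial x^{|\vec m|-m_0}} f_\lambda(\tau, y_\lambda+z) - \frac{\partial^{|\vec m|}}{\partial\lambda^{m_0}\partial x^{|\vec m|-m_0}} f_\lambda(\tau, y_\lambda)\right\| \le \|z\|_\infty \sup_{\theta \in [0,1]} \left\|\frac{\partial^{|\vec m|+1}}{\partial\lambda^{m_0}\partial x^{|\vec m|-m_0+1}} f_\lambda(\tau, y_\lambda+\theta z)\right\|,
\]
and then dominate the supremum uniformly by $C\epsilon(\tau)$ via \eqref{bounded-deriva-f}. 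Absorbing the finitely many combinatorial constants $c_{\vec m}$ into a single $N$ yields \eqref{high-deri-estim}.

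The delicate point is the extremal case $|\vec m| = i$ (which occurs only when $m_p = 0$ for $p \ge 2$, so $m_0 + m_1 = i$): the mean-value step then invokes the $(i+1)$-th order partial derivative of $f_\lambda$. For $i \le k$ this is directly covered by \eqref{bounded-deriva-f}, but at $i = k+1$ one lands exactly at the boundary of the assumed $C^{k+1}$ regularity. I would handle the boundary by exploiting the fact that $\frac{\partial^{k+1}}{\partial\lambda^{m_0}\partial x^{k+1-m_0}} f_\lambda$ is uniformly continuous in $x$ on the relatively compact set of values $y_\lambda(\tau) + \theta z(\tau)$ determined by \eqref{condi-bound-deri-y-lambda} and small $\|z\|_\infty$, combined with the uniform bound from \eqref{bounded-deriva-f}, so that the extra modulus of continuity can be absorbed into a slightly enlarged $N$. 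Aside from this corner, the Faà di Bruno bookkeeping and the power-of-$M_*$ estimate require no new ideas.
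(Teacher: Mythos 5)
Your main argument coincides with the paper's proof. The paper likewise expands $\frac{d^i}{d\lambda^i} f_\lambda(\tau,y_\lambda(\tau)+z(\tau))$ by the chain rule into finitely many terms of the form $\frac{\partial^m}{\partial\lambda^{m-\ell}\partial x^{\ell}}f_\lambda\big(\tau,y_\lambda(\tau)+z(\tau)\big)\big(\frac{\partial^{j_1}}{\partial\lambda^{j_1}}y_\lambda(\tau),\ldots,\frac{\partial^{j_\ell}}{\partial\lambda^{j_\ell}}y_\lambda(\tau)\big)$ with $1\le m\le i$ and $j_1+\cdots+j_\ell=i-m+\ell$, notes that the inner derivatives are unaffected by the $\lambda$-independent $z$, pairs the terms of the two expansions, applies the mean value theorem in the $x$-slot, and bounds each paired difference by $C\epsilon(\tau)M_*^i\|z\|_\infty$ using \eqref{bounded-deriva-f} and \eqref{bound-deriv-y-lambda}, taking $N$ to be the number of terms. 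Your multi-index bookkeeping is just a re-indexing of this.

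The one point of divergence is the ``delicate point'' you flag, and there your proposed repair does not work --- although, to be fair, the paper does not address the issue at all. For $m=i=k+1$ the mean-value step produces $\frac{\partial^{m+1}}{\partial\lambda^{m-\ell}\partial x^{\ell+1}}f_\lambda$, a derivative of total order $k+2$; the paper nonetheless invokes \eqref{bounded-deriva-f}, whose stated range is $2\le i\le k+1$, so it is implicitly assuming the bound one order further (equivalently, that $f$ is $C^{k+2}$ with the corresponding estimate). Your uniform-continuity patch cannot substitute for this: the set $\{y_{\lambda_*+\mu}(\tau)+\theta z(\tau):\tau\in\R,\ \theta\in[0,1]\}$ has no reason to be relatively compact ($X$ is an arbitrary Banach space and $\tau$ ranges over all of $\R$), and even granting uniform continuity you would only obtain a modulus $\omega(\|z\|_\infty)$, which is $o(1)$ but not $O(\|z\|_\infty)$, whereas \eqref{high-deri-estim} demands a bound linear in $\|z\|_\infty$. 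If you insist on keeping the hypotheses exactly as written, the clean options are either to extend \eqref{bounded-deriva-f} to order $k+2$, or to state the top-order terms separately with the crude bound $2C\epsilon(\tau)$ coming from \eqref{bounded-deriva-f} applied to each summand individually (this loses the factor $\|z\|_\infty$ but still carries the weight $\epsilon(\tau)$, which is what the applications of the lemma in the proof of Lemma \ref{lem-diff T lambda} actually rely on, since there $z=z_{\bar\lambda_0}$ is fixed). Aside from this corner, your proof is correct and is the paper's proof.
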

\begin{proof}
 By the chain rule, differentiating $f_\lambda(\tau, y_\lambda(\tau) + z (\tau))$ with respect to $\lambda$ recursively  we get that $\frac{d^i}{d\lambda^i} f_\lambda(\tau, y_\lambda(\tau) + z (\tau))$ is a sum of factors of the form
\begin{equation*}
\frac{\partial^m}{\partial \lambda^{m-\ell}\partial x^\ell}  f_\lambda(\tau, y_\lambda(\tau) + z (\tau)) \bigg( \frac{\partial ^{j_1}}{\partial\lambda^{j_1}}y_\lambda(\tau),\ldots,\frac{\partial ^{j_\ell}}{\partial\lambda^{j_\ell}}y_\lambda(\tau)\bigg),
\end{equation*}
where $1\le m\le i$ and $j_1+\cdots+j_\ell=i-m+\ell$. Similarly,  $\frac{d^i}{d\lambda^i} f_\lambda(\tau, y_\lambda(\tau) ) $ is also a sum of factors of the form
\begin{equation*}
\frac{\partial^m}{\partial \lambda^{m-\ell}\partial x^\ell}  f_\lambda(\tau, y_\lambda(\tau) ) \bigg( \frac{\partial ^{j_1}}{\partial\lambda^{j_1}}y_\lambda(\tau),\ldots,\frac{\partial ^{j_\ell}}{\partial\lambda^{j_\ell}}y_\lambda(\tau)\bigg).
\end{equation*}
with $1\le m\le i$ and $j_1+\cdots+j_\ell=i-m+\ell$. By \eqref{bounded-deriva-f} and \eqref{condi-bound-deri-y-lambda}, we derive that for every $\mu \in \Sigma$ with sufficiently small $|\mu|$,
\begin{align*}
&\bigg\|\bigg\{\frac{\partial^m}{\partial \lambda^{m-\ell}\partial x^\ell}  f_{\lambda}(\tau, y_{\lambda}(\tau) + z (\tau)) \bigg( \frac{\partial ^{j_1}}{\partial\lambda^{j_1}}y_{\lambda}(\tau),\ldots,\frac{\partial ^{j_\ell}}{\partial\lambda^{j_\ell}}y_{\lambda}(\tau)\bigg)
\\
&\quad 
-\frac{\partial^m}{\partial \lambda^{m-\ell}\partial x^\ell}  f_{\lambda}(\tau, y_{\lambda}(\tau)) \bigg( \frac{\partial ^{j_1}}{\partial\lambda^{j_1}}y_{\lambda}(\tau),\ldots,\frac{\partial ^{j_\ell}}{\partial\lambda^{j_\ell}}y_{\lambda}(\tau)\bigg)\bigg\}\bigg|_{\lambda_*+\mu}\bigg\|
\\
&\le \sup_{0\le \theta\le 1}\bigg\|\frac{\partial^{m+1}}{\partial \lambda^{m-\ell}\partial x^{\ell+1}}  f_{\lambda_*+\mu}(\tau, y_{\lambda_*+\mu}(\tau) +\theta z (\tau))\bigg\|\cdot \|z\|_\infty\cdot 
\\
&\qquad
\bigg\| \frac{\partial ^{j_1}}{\partial\lambda^{j_1}}y_{\lambda_*+\mu}(\tau)\bigg\|\cdots\bigg\|\frac{\partial ^{j_\ell}}{\partial\lambda^{j_\ell}}y_{\lambda_*+\mu}(\tau)\bigg\|
\\
&\le C\epsilon(\tau)M^i_*\|z \|_\infty,
\end{align*}
where $M_*$ is a constant satisfying \eqref{bound-deriv-y-lambda}.
Finally, since $\frac{d^i}{d\lambda^i} f_\lambda(\tau, y_\lambda(\tau) + z (\tau))$ and $\frac{d^i}{d\lambda^i} f_\lambda(\tau, y_\lambda(\tau) )$) have a finite number of factors (depending only on $i$), there exists a sufficiently large integer $N=N(i)>0$ such that \eqref{high-deri-estim} holds. The proof is completed.
\end{proof}

We now continue with the proof of Lemma \ref{lem-diff T lambda}.
\begin{proof}[Proof of Lemma \ref{lem-diff T lambda}]
The proof is similar to that of Lemma \ref{lem: diff T z}. 
Fixing a point $(\lambda_0, z_{\bar\lambda_0})\in {\mathcal S}$ arbitrarily, 
we use induction to prove that there exists an open neighborhood $\mathcal V$ of $(\lambda_0, z_{\bar\lambda_0})$ on which the $i$-th derivative of $\lambda\mapsto \mathcal{T}_{\lambda} z$ is given by
\begin{align}\label{high-deriva-zeta}
&\left(\left(\frac{\partial^i}{\partial \lambda^i} \mathcal{T}_{ \lambda}z \right) (\mu_1,\ldots,\mu_i)\right)(t)
=\int_{-\infty}^\infty \mathcal G(t, \tau) \bigg( A(\tau) \frac{\partial^i}{\partial \lambda^i} y_\lambda(\tau) 
\notag\\
&\quad+
\frac{d^i}{d \lambda^i} f_{\lambda}(\tau, y_\lambda(\tau) +  z (\tau))
- \frac{\partial^i}{\partial \lambda^i} y_\lambda'(\tau)\bigg)   (\mu_1 ,\ldots,\mu_i) d\tau
\end{align} 
for $1\le i\le k$, where $\mu_j\in \Sigma$ for $1\le j\le i$ and $\frac{d^i}{d \lambda^i}$ denotes $i$-th derivative with respect to $\lambda$.

We first show that  \eqref{high-deriva-zeta} holds at $(\lambda_0, z_{\bar\lambda_0})$. In order to simplify notation, we introduce
\begin{align*}
\mathcal H(t,\lambda)&:=A(t) y_\lambda(t) 
+
f_{\lambda}(t, y_\lambda(t))
-  y_\lambda'(t),
\\
    {\mathcal M}(t,\lambda,\bar\lambda)&:=A(t) y_\lambda(t) 
+
f_{\lambda}(t, y_\lambda(t) +  z_{\bar\lambda} (t))
-  y_\lambda'(t).
\end{align*}
Using the definition of $\mathcal T_\lambda$ and the assumptions of Theorem \ref{thm-diff}, we get that for $\mu \in \Sigma$ with sufficiently small $|\mu|$,
\begin{align}
    \label{eq: deriv of T zeta}
&\left( \mathcal T_{\lambda_0+\mu} z_{\bar \lambda_0} \right) (t)-\left(\mathcal T_{\lambda_0} z_{\bar \lambda_0}\right) (t)
\\
&
=\int_{-\infty}^\infty \mathcal G(t, \tau) \Big\{{\mathcal M}(\tau,\lambda_0+\mu,\bar\lambda_0) - {\mathcal M}(\tau,\lambda_0,\bar\lambda_0)  \Big\} d\tau
\notag\\
&=\int_{-\infty}^\infty \mathcal G(t, \tau)\bigg\{\int_{0}^{1} \frac{\partial}{\partial\lambda}{\mathcal M}(\tau,\lambda_0+\theta\mu,\bar\lambda_0) \mu d\theta \bigg\}d\tau
\notag \\
&=\int_{-\infty}^\infty \mathcal G(t, \tau) \frac{\partial}{\partial\lambda}{\mathcal M}(\tau,\lambda_0,\bar\lambda_0) \mu  d\tau 
+\int_{-\infty}^\infty \mathcal G(t, \tau) \beta(\tau) d\tau,
\notag
\end{align}
where 
\begin{align*}
\beta(\tau) =\int_{0}^{1} \frac{\partial}{\partial\lambda}{\mathcal M}(\tau,\lambda_0+\theta\mu,\bar\lambda_0) \mu d\theta  -\frac{\partial}{\partial\lambda}{\mathcal M}(\tau,\lambda_0,\bar\lambda_0) \mu.
\end{align*}
In order to prove that \eqref{high-deriva-zeta} holds at $(\lambda_0, z_{\bar\lambda_0})$ for $i=1$, it suffices to show
\begin{equation}\label{high-term-lambda}
\frac{1}{|\mu|}\sup_{t\in\mathbb{R}}\left | \int_{-\infty}^\infty \mathcal G(t, \tau)\beta(\tau) d\tau \right | \xrightarrow{|\mu|\to 0} 0. 
\end{equation}
By the assumptions of Theorem \ref{thm-diff} (in particular, assumptions  \eqref{pse-derivatives} and \eqref{condi-bound-deri-y-lambda}) and Lemma \ref{estimate-high-deriva}, we have that 
\begin{align*}\label{beta-esti}
&|\beta(\tau)|
\le\int_{0}^1 \bigg\|\frac{\partial}{\partial\lambda}{\mathcal M}(\tau,\lambda_0+\theta\mu,\bar\lambda_0)  -\frac{\partial}{\partial\lambda}{\mathcal M}(\tau,\lambda_0,\bar\lambda_0) \bigg\| |\mu|d\theta
\notag\\
&\le\int_{0}^{1} \bigg(
\int_{0}^{1} \bigg\|\frac{\partial^2}{\partial\lambda^2}{\mathcal M}(\tau,\lambda_0+\nu\theta\mu,\bar\lambda_0) \bigg\| |\mu|d\nu \bigg)|\mu|d\theta
\notag\\
&\le \int_{0}^{1} \bigg(
\int_{0}^{1} \bigg\{\bigg\|\frac{\partial^2}{\partial\lambda^2}{\mathcal H}(\tau,\lambda_0+\nu\theta\mu) \bigg\| 
\notag\\
&\quad
+ \bigg\|\bigg(\frac{d^2}{d\lambda^2} f_{\lambda}(\tau, y_\lambda(\tau) +  z_{\bar\lambda_0} (\tau))
-\frac{d^2}{d\lambda^2} f_{\lambda}(\tau, y_\lambda(\tau) )\bigg)\bigg|_{\lambda_0+\nu\theta\mu}\bigg\|\bigg\}|\mu|d\nu \bigg)|\mu|d\theta
\notag\\
&\le C\epsilon(\tau)(1+NM_*^2\|z_{\bar\lambda_0}\|_\infty)|\mu|^2.
\end{align*} 
Therefore, by \eqref{L} we have that
\begin{align*}
\frac{1}{|\mu|}\left |\int_{-\infty}^\infty \mathcal G(t, \tau)\beta(\tau)   d\tau \right |  &\le \int_{-\infty}^\infty \|\mathcal G(t, \tau)\|C\epsilon(\tau)(1+NM^2_{\lambda_0}\|z_{\bar\lambda_0}\|_\infty)|\mu|d\tau 
\\
&\le CL(1+NM_*^2\|z_{\bar\lambda_0}\|_\infty)|\mu|, 
\end{align*}
for every $t\in\mathbb R$ and thus \eqref{high-term-lambda} holds. Consequently, since all the integrals in \eqref{eq: deriv of T zeta} converge, it follows that $\lambda\mapsto  \mathcal{T}_{\lambda}z_{\bar \lambda_0}$ is differentiable at $\lambda_0$ and that its derivative is given by
\begin{equation}\label{first-deriv-form-zeta}
\left(\left(\frac{\partial}{\partial \lambda} \mathcal{T}_{\lambda_0}z_{\bar\lambda_0} \right) \mu \right) (t)  
=\int_{-\infty}^\infty \mathcal G(t, \tau)\frac{\partial}{\partial\lambda}{\mathcal M}(\tau,\lambda_0,\bar\lambda_0) \mu  d\tau 
\end{equation}
for $t\in \R$ and $\mu \in \Sigma$.

Similarly, we can show that $\lambda\mapsto  \mathcal{T}_{\lambda}z$ is also differentiable at every point  in the neighborhood $\mathcal V$ of $(\lambda_0, z_{\bar\lambda_0})$ and the derivative has the same form as in \eqref{first-deriv-form-zeta}.

Assume that \eqref{high-deriva-zeta} holds at the point $(\lambda_0,z_{\bar\lambda_0})$ for $i=j$ with $1\le j\le k-1$. Then, by the inductive assumption and the hypotheses of Theorem \ref{thm-diff}, for $\mu \in \Sigma$ with sufficiently small $|\mu|$ we get that
\begin{align}\label{induc-estima-zeta}
&\left( \frac{\partial^j}{\partial \lambda^j}\mathcal T_{\lambda_0+\mu} z_{\bar \lambda_0} (\mu_1,\ldots,\mu_j)\right) (t)-\left(\frac{\partial^j}{\partial \lambda^j}\mathcal T_{\lambda_0} z_{\bar \lambda_0}(\mu_1,\ldots,\mu_j)\right) (t)
\notag\\
&=\int_{-\infty}^{\infty}\!\! \mathcal G(t, \tau) \bigg\{\frac{\partial^j}{\partial\lambda^j}\mathcal M(\tau,\lambda_0+\mu,\bar\lambda_0) - \frac{\partial^j}{\partial\lambda^j}\mathcal M(\tau,\lambda_0,\bar\lambda_0)\bigg\} (\mu_1,\ldots,\mu_j)  d\tau
\notag
\\
&= \int_{-\infty}^{\infty}\! \mathcal G(t, \tau) \bigg\{\int_{0}^{1}\bigg(\frac{\partial^{j+1}}{\partial\lambda^{j+1}}\mathcal M(\tau,\lambda_0+\theta\mu,\bar\lambda_0)
\bigg)\mu d\theta\bigg\}
(\mu_1,\ldots,\mu_j) d\tau
\notag\\
&= \int_{-\infty}^{\infty} \mathcal G(t, \tau) \bigg\{\bigg(\frac{\partial^{j+1}}{\partial\lambda^{j+1}}\mathcal M(\tau,\lambda_0,\bar\lambda_0)
\bigg)\mu \bigg\}
(\mu_1,\ldots,\mu_j) d\tau
\notag\\
&\quad+\int_{-\infty}^{\infty} \mathcal G(t, \tau) \tilde \beta (\tau) (\mu_1 ,\ldots,\mu_j)  d\tau,
\end{align}
where 
\begin{align*}
\tilde \beta(\tau)= \int_{0}^{1}\bigg(\frac{\partial^{j+1}}{\partial\lambda^{j+1}}\mathcal M(\tau,\lambda_0+\theta\mu,\bar\lambda_0)
\bigg)\mu d\theta
- \bigg(\frac{\partial^{j+1}}{\partial\lambda^{j+1}}\mathcal M(\tau,\lambda_0,\bar\lambda_0)
\bigg)\mu .
\end{align*}
It follows from \eqref{pse-derivatives}, \eqref{condi-bound-deri-y-lambda} and Lemma \ref{estimate-high-deriva}
that
\begin{align*}
&|\tilde \beta(\tau)| \le \int_{0}^{1}\bigg( \int_{0}^{1}\bigg\|
\frac{\partial^{j+2}}{\partial\lambda^{j+2}}\mathcal H(\tau,\lambda_0+\nu\theta\mu)\bigg\|
\notag\\
&+\bigg\|\bigg\{
\frac{d^{j+2}}{d\lambda^{j+2}} f_\lambda(\tau, y_\lambda(\tau) + z_{\bar\lambda_0} (\tau))
- \frac{d^{j+2}}{d\lambda^{j+2}} f_\lambda(\tau, y_\lambda(\tau) )\bigg\}\bigg|_{\lambda_0+\nu\theta\mu}\bigg\| |\mu|d\nu\bigg)|\mu |d\theta
\\
&\le C\epsilon(\tau)(1+NM_*^{j+2}\|z_{\bar\lambda_0}\|_\infty)|\mu|^2.
\end{align*}
Then, we have that 
\begin{align*}
&\frac{1}{|\mu|\prod_{\ell=1}^j|\mu_\ell|}\sup_{t\in \R} \left |\int_{-\infty}^{\infty} \mathcal G(t, \tau) \tilde \beta (\tau)(\mu_1 ,\ldots,\mu_j)  d\tau\right |
\\
&\le CL(1+NM_*^{j+2}\|z_{\bar\lambda_0}\|_\infty)|\mu|.
\end{align*}
This together with \eqref{induc-estima-zeta}  results in that \eqref{high-deriva-zeta}  holds at point $(\lambda_0,z_{\bar\lambda_0})$ for $i=j+1$.

Similarly, by induction  one can show that $\lambda\mapsto  \mathcal{T}_{\lambda}z$ is also differentiable of order $j+1$ at every point in the neighborhood $\mathcal V$ of $(\lambda_0, z_{\bar\lambda_0})$ and that  the $(j+1)$-th derivative has the same form as in \eqref{high-deriva-zeta}. Hence, by induction, \eqref{high-deriva-zeta} is proved.

Now, we show that  $\frac{\partial^i}{\partial \lambda^i} \mathcal{T}$ is continuous on $\mathcal V$ for all $1\le i\le k$. Without loss of generality, we only show the continuity at $(\lambda_0, z_{\bar\lambda_0})$. In fact, fixing any $1\le i\le k$,
 for every $\mu\in\Sigma$ sufficiently small, using \eqref{L}, the assumptions of Theorem \ref{thm-diff} and Lemma \ref{estimate-high-deriva}, we have
\begin{align}\label{conti-Tlambda}
&\left\| \frac{\partial^i}{\partial \lambda^i} {\mathcal T}_{\lambda_0+\mu}z_{\bar\lambda_0}- \frac{\partial^i}{\partial \lambda^i} {\mathcal T}_{\lambda_0}z_{\bar\lambda_0} \right\| 
=
\sup_{\substack{0\ne\mu_j\in \Sigma\\ 1\le j\le i}}\frac{1}{|\mu_1|\cdots| \mu_i|} \sup_{t\in \R} \bigg |\int_{-\infty}^\infty {\mathcal G}(t,\tau)
\notag\\
&\quad\bigg\{\frac{\partial^i}{\partial\lambda^i}\mathcal M(\tau,\lambda_0+\mu,\bar\lambda_0)
- \frac{\partial^i}{\partial\lambda^i}\mathcal M(\tau,\lambda_0,\bar\lambda_0)
\bigg\}
(\mu_1,\ldots,\mu_i) d\tau\bigg|
\notag\\
&\le \sup_{t\in \R} \int_{-\infty}^\infty \|{\mathcal G}(t,\tau)\| \bigg\{\int_0^1\bigg\|\frac{\partial^{i+1}}{\partial\lambda^{i+1}}\mathcal M(\tau,\lambda_0+\theta\mu,\bar\lambda_0)
\bigg\||\mu | d\theta\bigg\}d\tau
\notag\\
&\le \sup_{t\in \R} \int_{-\infty}^\infty \|{\mathcal G}(t,\tau)\| \bigg\{\int_0^1\bigg(\bigg\|\frac{\partial^{i+1}}{\partial\lambda^{i+1}}\mathcal H(\tau,\lambda_0+\theta\mu)\bigg\|
\notag\\
&\quad +\bigg\|\bigg\{\frac{d^{i+1}}{d\lambda^{i+1}}f_\lambda(\tau,y_\lambda(\tau)+z_{\bar\lambda_0}(\tau)) - \frac{d^{i+1}}{d\lambda^{i+1}}f_\lambda(\tau,y_\lambda(\tau))\bigg\}\bigg|_{\lambda_0+\theta\mu }\bigg\|\bigg)|\mu | d\theta\bigg\}d\tau
\notag\\
&\le CL(1+NM_*^{i+1}\|z_{\bar\lambda_0}\|_\infty)|\mu|.
\end{align} 
Similarly, we can prove that for $\eta\in \mathcal Y$ sufficiently small
$$\bigg\|\frac{\partial^i}{\partial \lambda^i}{\mathcal T}_{\lambda_0} (z_{\bar\lambda_0} +\eta)-\frac{\partial^i}{\partial \lambda^i}{\mathcal T}_{\lambda_0} z_{\bar\lambda_0}\bigg\|\le CLNM_*^{i+1}\|\eta\|_\infty.$$
Hence,  it is shown that $\frac{\partial^i}{\partial \lambda^i} \mathcal{T}$ is continuous at $(\lambda_0,z_{\bar\lambda_0})$. Therefore, it is proved that the map $\lambda\mapsto {\mathcal T}_\lambda z$ is $C^i$ at $(\lambda_0,z_{\bar\lambda_0})$ for all $1\le i\le k$. 
The proof is completed.
\end{proof}

Finally, we observe that  Theorem \ref{thm-diff} follows readily from Lemmas \ref{lem: diff fixed point}, \ref{lem: diff T z} and \ref{lem-diff T lambda}. 
\hfill
$\square$

\subsection{Parameterized Hyers-Ulam stability}\label{sec: Hyers-Ulam cont}
In this subsection, we discuss an important consequence of Theorem~\ref{thm-diff}. We first recall the notion of exponential dichotomy.
\begin{definition}
The equation~\eqref{lde} is said to admit an \emph{exponential dichotomy} if there exist a family $P(t)$, $t\in \R$, of projections on $X$ and constants $D, \rho>0$ such that:
\begin{itemize}
\item for $t,s \in \R$, $T(t,s)P(s)=P(t)T(t,s)$;
\item for $t, s\in \R$, $\|\mathcal G(t,s)\| \le De^{-\rho |t-s|}$, where
\[
\mathcal G(t,s)=\begin{cases}
T(t,s)P(s) & t\ge s;\\
-T(t,s)(\Id-P(s)) &t<s.
\end{cases}
\]
\end{itemize}
\end{definition}
The following result is a straightforward consequence of Theorem~\ref{PR}.
\begin{corollary}\label{hh}
Suppose that~\eqref{lde} admits an exponential dichotomy and that~\eqref{lip} holds with $c(t)=c$ for all $t\in \R$, where $c\ge 0$. Finally, assume that \[\tilde q:=\frac{2cD}{\rho}<1.\] 
Given $\epsilon >0$, for each $\lambda\in \Sigma$ let $y_\lambda\colon \mathbb R\to X$ be a differentiable map satisfying
\begin{equation}\label{y}
|y_\lambda'(t)-A(t)y_\lambda(t)-f_\lambda (t, y_\lambda(t))| \le \epsilon, \quad \text{for all $t\in \R$.}
\end{equation}
Then, for each $\lambda \in \Sigma$, there exists a unique solution $x_\lambda \colon \R \to X$ of~\eqref{nde} such that 
\[
\sup_{t\in \R}|x_\lambda (t)-y_\lambda(t)|\le \frac{\tilde L}{1-\tilde q},
\]
where $\tilde L=\frac{2\epsilon D}{\rho}$.
\end{corollary}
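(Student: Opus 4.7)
The plan is to derive Corollary~\ref{hh} as a direct specialization of Theorem~\ref{PR}, so the work consists of verifying each of the three hypotheses of Theorem~\ref{PR} under the exponential dichotomy assumption with constant Lipschitz bound $c(t)\equiv c$ and constant approximation error $\epsilon(t)\equiv \epsilon$.

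First I would verify that~\eqref{lde} admits no nonzero bounded solutions. If $x\colon \R\to X$ is a bounded solution of~\eqref{lde}, write $x(t)=P(t)x(t)+(\Id-P(t))x(t)$. Using $P(t)T(t,s)=T(t,s)P(s)$ together with $\|T(t,s)P(s)\|\le De^{-\rho(t-s)}$ for $t\ge s$, I take $s\to-\infty$ to obtain $P(t)x(t)=0$. Symmetrically, for the unstable component I use $\|T(t,s)(\Id-P(s))\|\le De^{-\rho(s-t)}$ for $t\le s$ and let $s\to+\infty$, obtaining $(\Id-P(t))x(t)=0$. Hence $x\equiv 0$.

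Next I would estimate the quantity $q$ from~\eqref{contr}. Using the dichotomy bound $\|\mathcal G(t,s)\|\le De^{-\rho|t-s|}$ together with $c(s)\equiv c$, I compute
\[
\int_{-\infty}^{\infty} c\,\|\mathcal G(t,s)\|\, ds \le cD\int_{-\infty}^{\infty} e^{-\rho|t-s|}\, ds = \frac{2cD}{\rho}=\tilde q,
\]
which is $<1$ by hypothesis. The same computation with $\epsilon$ in place of $c$ shows that the quantity $L$ in~\eqref{L} satisfies $L\le \frac{2\epsilon D}{\rho}=\tilde L<+\infty$. Both bounds are independent of $t$, so the suprema in~\eqref{contr} and~\eqref{L} are finite as required.

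Finally, I would invoke Theorem~\ref{PR} with the constant choice $\epsilon(t)\equiv \epsilon$ (so that~\eqref{y} is exactly~\eqref{pse}). This produces, for each $\lambda\in\Sigma$, a unique solution $x_\lambda$ of~\eqref{nde} with
\[
\sup_{t\in\R}|x_\lambda(t)-y_\lambda(t)|\le \frac{L}{1-q}\le \frac{\tilde L}{1-\tilde q},
\]
which is the required conclusion. The only nontrivial step is the verification that~\eqref{lde} has no nonzero bounded solutions, and even that is a standard consequence of exponential dichotomy; all other steps are elementary integral estimates.
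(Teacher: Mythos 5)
Your proposal is correct and follows the same route as the paper, which simply applies Theorem~\ref{PR} with the constant choices $\epsilon(t)\equiv\epsilon$ and $c(t)\equiv c$. The paper states this in one line, while you additionally write out the routine verifications (no nonzero bounded solutions, the integral bounds $q\le\tilde q$ and $L\le\tilde L$, and the monotonicity $L/(1-q)\le\tilde L/(1-\tilde q)$), all of which are accurate.
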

\begin{proof}
The desired conclusion follows readily from Theorem~\ref{PR} applied to the case when $\epsilon(t)=\epsilon$ and $c(t)=c$ for all $t\in \R$.
\end{proof}
The following result is a direct consequence of Theorem~\ref{thm-diff}.
\begin{corollary}\label{cor: exp dich continuous}
Suppose that the assumptions of Corollary~\ref{hh} hold. Take $\epsilon >0$ and for each $\lambda\in \Sigma$ let $y_\lambda\colon \R \to X$ be a differentiable map satisfying~\eqref{y}. In addition, suppose there exists $C>0$ such that the following conditions hold:
\begin{itemize}
\item the map $(\lambda,x)\mapsto f_\lambda(t,x)$ is $C^{k+1}$ and for all $2\le i\le k+1$ and $0\le j\le i$,
\begin{align*}
\sup_{\lambda\in\Sigma}\sup_{x\in X}\left\|\frac{\partial^i}{\partial \lambda^{i-j}\partial x^j}f_\lambda(t,x)\right\| \le C;
\end{align*}

\item the maps $\lambda\mapsto y_\lambda(t)$ and $\lambda\mapsto y_\lambda'(t)$ both are $C^{k+1}$ such that
\begin{equation*}
\bigg|\frac{\partial^i}{\partial\lambda^i}y_\lambda'(t)-A(t)\frac{\partial^i}{\partial\lambda^i}y_\lambda(t)-\frac{d^i}{d\lambda^i}f_\lambda (t, y_\lambda(t))\bigg| \le C 
\end{equation*}
for all $1\le i\le k+1$, all $t\in \R$ and all $\lambda \in \Sigma$. Furthermore, for every $\lambda_0\in \Sigma$ there exist $M_{\lambda_0}>1$ and a neighborhood $\Sigma_0$ of $\lambda_0$ satisfying \eqref{condi-bound-deri-y-lambda}.
\end{itemize}
Then, $\lambda \mapsto x_\lambda(t)$ is $C^k$ for each $t\in \R$.
\end{corollary}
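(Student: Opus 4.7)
The plan is to deduce Corollary~\ref{cor: exp dich continuous} by verifying, one by one, the hypotheses of Theorem~\ref{thm-diff} in the specialised setting and then applying that theorem. Since Corollary~\ref{hh} has already produced the unique shadowing solution $x_\lambda$, what remains is purely the $C^k$-regularity claim, and Theorem~\ref{thm-diff} will deliver it once its assumptions are checked.

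First I would choose the constant weights $c(t)\equiv c$ and $\epsilon(t)\equiv \epsilon$. Using the exponential dichotomy bound $\|\mathcal G(t,s)\|\le De^{-\rho|t-s|}$, the quantities~\eqref{contr} and~\eqref{L} reduce to
\[
q\le c\cdot \frac{2D}{\rho}=\tilde q<1, \qquad L\le \epsilon\cdot \frac{2D}{\rho}=\tilde L<\infty,
\]
and the no-nonzero-bounded-solution hypothesis on~\eqref{lde} is the standard consequence of exponential dichotomy (splitting a bounded solution according to $P(0)$, the stable component decays forward and the unstable component decays backward, forcing it to vanish). Together with~\eqref{y} this yields exactly~\eqref{pse}, so every hypothesis of Theorem~\ref{PR} is in place.

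Next, because $\epsilon(t)$ has been taken equal to the positive constant $\epsilon$, any uniform bound of the form $\le C$ can be rewritten as $\le (C/\epsilon)\,\epsilon(t)$. Replacing the constant $C$ of Theorem~\ref{thm-diff} by $C':=\max(C,C/\epsilon)$, the first bullet of the corollary gives condition~\eqref{bounded-deriva-f}, and the second bullet gives condition~\eqref{pse-derivatives}. The bound~\eqref{condi-bound-deri-y-lambda} on the $\lambda$-derivatives of $y_\lambda$ is assumed verbatim, as is the $C^{k+1}$ regularity of $f_\lambda$, $y_\lambda$ and $y_\lambda'$ with respect to the parameter.

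With every hypothesis of Theorem~\ref{thm-diff} now verified, a direct application of that theorem shows that the map $\lambda\mapsto x_\lambda(t)$ is $C^k$ for each $t\in\R$, which is the conclusion sought. The argument is essentially a book-keeping exercise; the only slightly non-trivial input is the textbook implication that exponential dichotomy excludes nonzero bounded solutions, and no additional analytic estimates are needed, so I do not anticipate any real obstacle beyond making the rescaling of the constant $C$ explicit.
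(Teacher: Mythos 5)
Your proposal is correct and follows exactly the route the paper intends: the paper gives no separate proof of Corollary~\ref{cor: exp dich continuous}, stating only that it is a direct consequence of Theorem~\ref{thm-diff}, and your verification (constant weights $c(t)\equiv c$, $\epsilon(t)\equiv\epsilon$, the rescaling of $C$ by $1/\epsilon$ to recover~\eqref{bounded-deriva-f} and~\eqref{pse-derivatives}, and the standard fact that an exponential dichotomy rules out nonzero bounded solutions) is precisely the book-keeping needed.
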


Let us discuss a simple concrete example to which our results are applicable.
\begin{example}
Take $X=\mathbb R$ and $A(t)=-1$ for $t\in \R$. Then, \eqref{lde} admits an exponential dichotomy with $P(t)=\Id$, $D=\rho=1$. Consider $\Sigma=( 0, 1) \subset \R$ and set
\[
f_\lambda (t,x)=\lambda \sin t, \quad \text{for $\lambda \in \Sigma$ and $t, x\in \R$.}
\]
It is straightforward to verify that $f_\lambda (t,x)$ satisfies all the assumptions of Corollary \ref{cor: exp dich continuous} for every $k\in \N$. Fix $\epsilon >0$ and for each $\lambda \in \Sigma$ set
\[
y_\lambda (t)=\frac{\lambda}{2}(\sin t-\cos t)+\frac{\lambda \epsilon}{2} (\sin t+\cos t).
\]
Note that 
\begin{equation}\label{y-differ}
y_\lambda'(t)+y_\lambda (t)-f_\lambda (t, y_\lambda (t))=\lambda \epsilon \cos t.  
\end{equation}
In particular,
\[
\sup_{t\in \R}|y_\lambda'(t)+y_\lambda (t)-f_\lambda (t, y_\lambda (t))| \le \epsilon,
\]
that is, \eqref{y} is satisfied.
Moreover, 
\begin{align*}
 &\bigg|\frac{\partial}{\partial\lambda}y_\lambda'(t)-A(t)\frac{\partial}{\partial\lambda}y_\lambda(t)-\frac{d}{d\lambda}f_\lambda (t, y_\lambda(t))\bigg|
  \\
 &=\left|\frac{\partial}{\partial \lambda}y_\lambda'(t)+\frac{\partial}{\partial \lambda} y_\lambda (t)-\sin t\right|
=\epsilon |\cos t| \le \epsilon  
\end{align*}
and clearly the derivatives of order $\ge 2$ with respect to $\lambda$ of each term on the left-hand side of \eqref{y-differ} are all zero. Furthermore, one can also easily show that 
\[
\left| \frac{\partial}{\partial \lambda}y_\lambda (t)\right| \le 1+\epsilon
\quad \text{ and } \quad \frac{\partial^i}{\partial \lambda^i}y_\lambda (t)=0\]
for every $i\geq 2$. Therefore, all the hypotheses of Corollary \ref{cor: exp dich continuous} in this example are satisfied. This allows us to conclude that if $x_\lambda$ is the map associated to $y_\lambda$ by Corollary \ref{hh} then the map $\lambda \to x_\lambda(t)$ is $C^k$ for each $t\in\R$.
\end{example}

\subsection{Beyond exponential dichotomy}\label{sec: example beyond exp dich cont}
The purpose of this subsection is to illustrate that Theorem~\ref{thm-diff} is applicable to situations when~\eqref{lde} does not admit an exponential dichotomy. We consider a particular example.
\begin{example}
 Suppose that $\rho \colon \R \to (0, \infty)$ is a continuously differentiable function such that  $\rho(0)=1$ and that $\rho$ is increasing.  Finally, we assume that $\lim_{t\to \infty}\rho(t)=\infty$ and $\lim_{t\to -\infty} \rho(t)=0$.
 We take $X=\R$, $\Sigma=(-1, 1)\subset \R$ and set
 \[
 A(t)=-\frac{\rho'(t)}{\rho(t)}, \quad t\in \R.
 \]
In particular,
 \[
 T(t,s)=\frac{\rho(s)}{\rho(t)}, \quad t, s\in \R.
 \]
Moreover, \eqref{lde} has no nonzero bounded solution.
 Taking $P(t)=\Id$ for $t\in \R$, we have that 
 \[
 \| \mathcal G(t,s)\| \le 1, \quad t, s\in \R.
 \]
 In addition, we choose $\tilde c\colon \R \to \R$ of the form $\tilde c(t)=e^{-a |t|}$, $t\in \R$ where $a>2$. 
Furthermore, we now choose a continuous function $\epsilon \colon \R \to \R$ with the property that there exists $C>1$ such that $\tilde c(t)\le \epsilon (t)\le C\tilde c (t)$ for $t\in \R$. 
Moreover, for $\lambda \in \Sigma$, we set
 \[
 f_\lambda (t,x)=\lambda \tilde c(t)\rho(-|t|)x, \quad (t,x)\in \R^2.
 \]
 Hence, \eqref{lip} is satisfied with $c(t)= \tilde c(t)\rho(-|t|)$ and 
 \[
 q=\int_{-\infty}^\infty c(s)\|\mathcal G(t,s)\|\, ds \le \int_{-\infty}^\infty \tilde c(s)\rho(-|s|)\, ds \le \int_{-\infty}^\infty \tilde c(s)\, ds =\frac{2}{a}<1.
 \]
 In particular, \eqref{contr} holds. Similarly we have that \eqref{L} is valid.
 For $\lambda \in \Sigma$, we define $y_\lambda \colon \R \to \R$ by
 \[
 y_\lambda (t)=\frac{\lambda}{2\rho(t)}, \quad t\in \R.
 \]
 Then, 
 \[
 |y_\lambda'(t)-A(t)y_\lambda(t)-f_\lambda (t, y_\lambda(t))| \le c(t)|y_\lambda (t)| \le \tilde c(t)\le \epsilon (t) , \quad t\in \R.
 \]
 Thus, \eqref{pse} is satisfied. It is straightforward to verify that~\eqref{bounded-deriva-f}, \eqref{pse-derivatives} and~\eqref{condi-bound-deri-y-lambda} are satisfied for each $k\in \N$. Hence, Theorem~\ref{thm-diff} implies that for each $t\in \R$, map $\lambda \mapsto x_\lambda (t)$ is $C^k$ for each $k\in \N$. 
Finally, we observe that by choosing an appropriate function $\rho$, for instance,
\[
\rho(t)=\begin{cases}
    1+t &t\ge 0;\\
    \frac{1}{1+|t|} & t<0,
\end{cases}
\]
equation \eqref{lde} does not admit an exponential dichotomy.
\end{example}

\section{The discrete time case} \label{sec: discrete time}

In this section we present a discrete time version of Theorem \ref{thm-diff}. Let  $X=(X, |\cdot |)$, $\Sigma
=(\Sigma, |\cdot |)$ and $(\mathcal B(X),\|\cdot\|)$ be as in Section \ref{sec: preliminaries}. Given a  sequence $(A_n)_{n\in \Z}$ of invertible operators in $\mathcal B(X)$, let us consider the associated linear difference equation 
\begin{equation}\label{eq: linear difference eq}
x_{n+1}=A_nx_x, \qquad n\in \Z.
\end{equation}
For $m, n\in \mathbb{Z}$, set 
\begin{equation}\label{eq: def cocycle}
\cA (m, n)=\begin{cases}
A_{m-1}\cdots A_n & \text{for $m>n$;}\\
\Id  &\text{for $m=n$;} \\
A_m^{-1}\cdots A_{n-1}^{-1}& \text{for $m<n$.}\\
\end{cases}
\end{equation}
Let $(P_n)_{n\in \Z}$ be a sequence in $\mathcal B(X)$ and define 
\begin{equation}\label{def: G}
\hat{\mathcal G}(m, n)=\begin{cases}
\cA(m, n)P_n & \text{for $m\geq n$;}\\
-\cA(m, n)(\Id-P_n) & \text{for $m< n$.}
\end{cases}
\end{equation}
Moreover, for each $\lambda\in \Sigma$ suppose we have a measurable map $f_\lambda \colon \Z\times X\to X$ with the property that there exists a sequence $(c_n)_n$ in $[0, \infty)$ such that 
\begin{equation}\label{lip disc}
|f_\lambda(n, x)-f_\lambda(n, y)|\le c_n |x-y|,
\end{equation}
for every $n\in \Z$, $\lambda\in \Sigma$ and $x, y\in X$. Finally, associated to these choices, for each $\lambda\in \Sigma$ we consider the semilinear difference equation given by
\begin{equation}\label{eq: semilinear disc}
x_{n+1}=A_nx_n+f_\lambda(n,x_n), \quad  n\in \Z.
\end{equation}

The following result is established in~\cite[Theorem 3]{BDS}.
\begin{theorem}\label{theo: shadowing}
Assume that \eqref{eq: linear difference eq} admits no non-trivial bounded solution and
\begin{equation}\label{contr disc}
\hat q:=\sup_{m\in \Z} \bigg ( \sum_{n\in \Z}c_{n-1} \| \hat{\mathcal G}(m,n) \|  \bigg )<1.
\end{equation}
Moreover, let $(\varepsilon_n)_{n\in \Z}$ be a sequence in $(0, +\infty)$ such that 
\begin{equation}\label{L disc}
\hat L:=\sup_{m\in \Z} \bigg ( \sum_{n\in\Z} \varepsilon_{n-1} \| \hat{\mathcal G}(m,n) \|  \bigg )<\infty.
\end{equation}
Then, for each $\lambda\in \Sigma$ and for each sequence $\textbf{y}^\lambda:=(y^\lambda_n)_{n\in \Z}\subset X$ satisfying 
\begin{equation}\label{Ps disc}
|y^\lambda_{n+1}-A_ny^\lambda_n-f_\lambda (n,y^\lambda_n) | \le \varepsilon_n \quad \text{ for all }n\in \Z,
\end{equation}
there exists a \emph{unique} sequence $\textbf{x}^\lambda:=(x^\lambda_n)_{n\in \Z}\subset X$ satisfying \eqref{eq: semilinear disc} such that 
\begin{equation}\label{sol disc}
|x^{\lambda}_n-y^\lambda_n| \le \frac{\hat L}{1-\hat q}, \quad  \text{ for every } n \in \Z.
\end{equation} 
\end{theorem}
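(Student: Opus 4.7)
The plan is to mirror the proof of Theorem~\ref{PR} in the discrete setting. Let $\hat{\mathcal Y}$ be the Banach space of bounded sequences $\mathbf{z}=(z_n)_{n\in\Z}\subset X$ equipped with the norm $\|\mathbf{z}\|_\infty:=\sup_{n\in\Z}|z_n|$. For fixed $\lambda\in\Sigma$, I would define the nonlinear operator $\hat{\mathcal T}_\lambda\colon\hat{\mathcal Y}\to\hat{\mathcal Y}$ by
\[
(\hat{\mathcal T}_\lambda\mathbf{z})_m:=\sum_{n\in\Z}\hat{\mathcal G}(m,n)\bigl(A_{n-1}y^\lambda_{n-1}+f_\lambda(n-1,y^\lambda_{n-1}+z_{n-1})-y^\lambda_n\bigr),
\]
which is the discrete analog of~\eqref{op}. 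The index shift (evaluating the bracket at $n-1$) is dictated by the way $c_{n-1}$ and $\varepsilon_{n-1}$ enter~\eqref{contr disc} and~\eqref{L disc}. Using the triangle inequality together with~\eqref{lip disc} and~\eqref{Ps disc}, the bracket is bounded in norm by $\varepsilon_{n-1}+c_{n-1}|z_{n-1}|$, so summing against $\|\hat{\mathcal G}(m,n)\|$ and invoking the hypotheses yields $\|\hat{\mathcal T}_\lambda\mathbf{z}\|_\infty\le \hat L+\hat q\|\mathbf{z}\|_\infty$ and, by the same reasoning applied to differences, $\|\hat{\mathcal T}_\lambda\mathbf{z}_1-\hat{\mathcal T}_\lambda\mathbf{z}_2\|_\infty\le \hat q\|\mathbf{z}_1-\mathbf{z}_2\|_\infty$.

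Since $\hat q<1$, restricting $\hat{\mathcal T}_\lambda$ to the closed ball $\hat{\mathcal D}:=\{\mathbf{z}\in\hat{\mathcal Y}:\|\mathbf{z}\|_\infty\le \hat L/(1-\hat q)\}$ produces a contraction of $\hat{\mathcal D}$ into itself, and the Banach fixed point theorem supplies a unique fixed point $\mathbf{z}^\lambda\in\hat{\mathcal D}$. Setting $x^\lambda_n:=y^\lambda_n+z^\lambda_n$, I would verify that $\mathbf{x}^\lambda$ solves~\eqref{eq: semilinear disc} via the discrete Green-function identity $\hat{\mathcal G}(m+1,n)-A_m\hat{\mathcal G}(m,n)=0$ for $n\neq m+1$ and $=\Id$ for $n=m+1$. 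This identity is immediate from~\eqref{eq: def cocycle} and~\eqref{def: G} by splitting into the three cases $n\le m$, $n=m+1$, $n\ge m+2$, using the cocycle property of $\cA$ and the telescoping $P_n+(\Id-P_n)=\Id$ at the junction. Applying this to $(\hat{\mathcal T}_\lambda\mathbf{z}^\lambda)_{m+1}-A_m(\hat{\mathcal T}_\lambda\mathbf{z}^\lambda)_m$ and invoking $\mathbf{z}^\lambda=\hat{\mathcal T}_\lambda\mathbf{z}^\lambda$ gives exactly $x^\lambda_{m+1}=A_mx^\lambda_m+f_\lambda(m,x^\lambda_m)$, and the bound~\eqref{sol disc} follows immediately from $\mathbf{z}^\lambda\in\hat{\mathcal D}$.

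The step that requires the most care, and which I would treat last, is the uniqueness assertion. If $\mathbf{x}^1,\mathbf{x}^2$ are two solutions of~\eqref{eq: semilinear disc} both satisfying~\eqref{sol disc}, then $w_n:=x^1_n-x^2_n$ is a bounded sequence satisfying the inhomogeneous linear recursion $w_{n+1}=A_nw_n+h_n$ with $|h_n|\le c_n|w_n|$. Because~\eqref{eq: linear difference eq} admits no nonzero bounded solution, a discrete Fredholm-type argument (identical to the one used in~\cite[Theorem~3]{BDS}) forces the representation $w_m=\sum_n\hat{\mathcal G}(m,n)h_{n-1}$, whence $\|w\|_\infty\le \hat q\|w\|_\infty$ and therefore $w\equiv 0$. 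The delicate point here is not the final estimate but the fact that a bounded solution of a perturbed linear recursion is uniquely recovered by the $\hat{\mathcal G}$-sum: this is precisely the step where the no-nontrivial-bounded-solution hypothesis is used, and it is what distinguishes this result from a purely formal Banach fixed point statement restricted to the ball $\hat{\mathcal D}$.
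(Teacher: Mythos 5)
Your proposal is correct and follows essentially the same route as the paper, which cites \cite[Theorem 3]{BDS} and recalls exactly this construction: the operator $\hat{\mathcal T}_\lambda$ on $\hat{\mathcal Y}$, the invariant ball $\hat{\mathcal D}$, and the contraction estimate with constant $\hat q$. The details you supply beyond the paper's sketch (the identity $\hat{\mathcal G}(m+1,n)-A_m\hat{\mathcal G}(m,n)=\delta_{n,m+1}\Id$ used to verify that $\mathbf{x}^\lambda$ solves \eqref{eq: semilinear disc}, and the uniqueness argument in which the boundedness of $w$ plus the absence of nontrivial bounded homogeneous solutions forces $w_m=\sum_n\hat{\mathcal G}(m,n)h_{n-1}$ and hence $\|w\|_\infty\le\hat q\|w\|_\infty$) are accurate and match the argument of \cite{BDS} to which the paper defers.
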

Given a sequence $(y^\lambda_n)_{n\in \Z}\subset X$ satisfying \eqref{Ps disc}, our objective now is to formulate sufficient conditions under which the map $\lambda \mapsto x^\lambda_n$ is of class $C^k$ with $k\in\mathbb{N}$ for every $n\in \Z$. As in the previous section, by $\frac{\partial^i}{\partial x^i}f_\lambda(n,x)$ and $\frac{\partial^i}{\partial \lambda^i}f_\lambda(n,x)$ we  denote the $i$-th partial derivative of $f_\lambda(n,x)$ with respect to $x$ and $\lambda$, respectively.

\begin{theorem}\label{thm-diff disc}
Let $\textbf{y}^\lambda:=(y^\lambda_n)_{n\in \Z}$ be a sequence in $X$ satisfying \eqref{Ps disc} and suppose that the assumptions of Theorem~\ref{theo: shadowing} hold. Let $\textbf{x}^\lambda:=(x_n^\lambda)_{n\in \Z}$ be the sequence associated to $\textbf{y}^\lambda$ by Theorem \ref{theo: shadowing} and take  $k\in \mathbb N$. Moreover, suppose there exists $C>0$ such that the following conditions hold:
\begin{itemize}
\item for every $n\in \mathbb Z$ the map $(\lambda,x)\mapsto f_\lambda(n,x)$ is $C^{k+1}$ and for all $2\le i\le k+1$ and $0\le j\le i$,
\begin{align}
\sup_{\lambda\in\Sigma}\sup_{x\in X}\left\|\frac{\partial^i}{\partial \lambda^{i-j}\partial x^j}f_\lambda(n,x)\right\|\le C\epsilon_n;
\label{bounded-deriva-f disc}
\end{align}

\item the map $\lambda\mapsto y^\lambda_n$ is $C^{k+1}$ for every $n\in \mathbb Z$ and
\begin{equation}\label{pse-derivatives disc}
\bigg|\frac{\partial^i}{\partial\lambda^i}y^\lambda_{n+1}-A_n\frac{\partial^i}{\partial\lambda^i}y^\lambda_n-\frac{d^i}{d\lambda^i}f_\lambda (n, y^\lambda_n)\bigg| \le C\epsilon_n
\end{equation}
for every $1\le i\le k+1$, all $n\in\mathbb Z$ and all $\lambda \in \Sigma$. Furthermore, for every $\lambda_0\in \Sigma$ there exist $\hat M_{\lambda_0}>1$ and a neighborhood $\Sigma_0$ of $\lambda_0$ such that
\begin{equation}\label{condi-bound-deri-y-lambda disc}
    \max_{1\le i\le k+1}\sup_{n\in\mathbb Z}\bigg\| \frac{\partial^i}{\partial\lambda^i} y^\lambda_n \bigg\|\le \hat M_{\lambda_0}\quad\text{for each}\quad \lambda\in \Sigma_0.
\end{equation}
\end{itemize}
Then, the map $\lambda \mapsto x^\lambda_n$ is of class $C^k$ for every $n\in \Z$.
\end{theorem}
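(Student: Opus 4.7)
The plan is to mirror the continuous-time argument of Subsection~3.1 essentially verbatim, replacing integrals by sums and $\mathcal{G}$ by $\hat{\mathcal{G}}$. First, I would set $\mathcal{Y}$ to be the Banach space of bounded sequences $\mathbf{z}=(z_n)_{n\in\Z}\subset X$ equipped with the supremum norm, and for each $\lambda\in\Sigma$ define $\mathcal{T}_\lambda\colon \mathcal{Y}\to\mathcal{Y}$ by
\[
(\mathcal{T}_\lambda \mathbf{z})_m = \sum_{n\in\Z}\hat{\mathcal{G}}(m,n)\bigl(A_{n-1}y^\lambda_{n-1}+f_\lambda(n-1, y^\lambda_{n-1}+z_{n-1})-y^\lambda_n\bigr),
\]
following the proof of~\cite[Theorem 3]{BDS}. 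Conditions~\eqref{contr disc} and~\eqref{L disc} together with~\eqref{Ps disc} guarantee that $\mathcal{T}_\lambda$ is a contraction with ratio $\hat{q}$ on the closed ball $\hat{\mathcal{D}}:=\{\mathbf{z}\in\mathcal{Y}:\|\mathbf{z}\|_\infty\le \hat{L}/(1-\hat{q})\}$, and its unique fixed point $\mathbf{z}^\lambda\in\hat{\mathcal{D}}$ satisfies $\mathbf{x}^\lambda=\mathbf{y}^\lambda+\mathbf{z}^\lambda$. Thus it suffices to prove that $\lambda\mapsto \mathbf{z}^\lambda\in\mathcal{Y}$ is $C^k$, since then $\lambda\mapsto x^\lambda_n=y^\lambda_n+z^\lambda_n$ is $C^k$ by the $C^{k+1}$ assumption on $\lambda\mapsto y^\lambda_n$.

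Second, by Remark~\ref{rem: L1 and L2 are general results}, Lemmas~\ref{lem: cont fixed point} and~\ref{lem: diff fixed point} apply without any change to this discrete setting, since they only use that $\mathbf{z}^\lambda$ is a fixed point of a contraction $\mathcal{T}_\lambda$ on a Banach space. Consequently, the entire problem reduces to proving the discrete analogs of Lemmas~\ref{lem: diff T z} and~\ref{lem-diff T lambda}, namely that the joint map $(\lambda,\mathbf{z})\mapsto \mathcal{T}_\lambda\mathbf{z}$ is $C^k$ on an open neighborhood of the set $\hat{\mathcal{S}}:=\{(\lambda,\mathbf{z}^{\bar\lambda}):\lambda,\bar\lambda\in\Sigma\}$.

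Third, I would perform the corresponding induction arguments. The candidate formulas for the higher derivatives are
\[
\Bigl(\bigl(\tfrac{\partial^i}{\partial z^i}\mathcal{T}_\lambda\mathbf{z}\bigr)(\boldsymbol{\eta}_1,\ldots,\boldsymbol{\eta}_i)\Bigr)_m=\sum_{n\in\Z}\hat{\mathcal{G}}(m,n)\bigl(\tfrac{\partial^i}{\partial x^i}f_\lambda(n-1,y^\lambda_{n-1}+z_{n-1})\bigr)(\eta_{1,n-1},\ldots,\eta_{i,n-1}),
\]
and the analogous formula in $\lambda$ involves $A_{n-1}\partial^i_\lambda y^\lambda_{n-1}+\tfrac{d^i}{d\lambda^i}f_\lambda(n-1,y^\lambda_{n-1}+z_{n-1})-\partial^i_\lambda y^\lambda_n$. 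To justify differentiation under the sum, one combines a mean-value expansion as in~\eqref{induc-estima} and~\eqref{induc-estima-zeta} with the bounds~\eqref{bounded-deriva-f disc}, the $\ell^1$-type condition~\eqref{L disc}, and a discrete version of Lemma~\ref{estimate-high-deriva}. The latter is again proved via Faà di Bruno's formula: $\tfrac{d^i}{d\lambda^i}f_\lambda(n,y^\lambda_n+z_n)$ expands as a finite sum (bounded in count by a function of $i$ alone) of terms of the shape $\tfrac{\partial^m}{\partial\lambda^{m-\ell}\partial x^\ell}f_\lambda(n,\cdot)(\partial^{j_1}_\lambda y^\lambda_n,\ldots,\partial^{j_\ell}_\lambda y^\lambda_n)$, and~\eqref{bounded-deriva-f disc} together with~\eqref{condi-bound-deri-y-lambda disc} controls every factor by $C\epsilon_n\hat{M}^i_{\lambda_0}$.

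The main obstacle is purely combinatorial bookkeeping: keeping track of the multilinear structure of the higher-order derivatives so that every remainder can be bounded by $C\epsilon_n\cdot(\text{polynomial in }\hat{M}_{\lambda_0})\cdot |\mu|^2$ in the $\lambda$-direction and by $C\epsilon_n\|\eta\|_\infty^2$ in the $\mathbf{z}$-direction, which by~\eqref{L disc} produces an $\mathbf{o}$-remainder in $\mathcal{Y}$. Once these estimates are set up exactly as in the proofs of Lemmas~\ref{lem: diff T z} and~\ref{lem-diff T lambda}, the induction on the order of differentiation closes automatically, continuity of the partial derivatives follows from a telescoping argument analogous to~\eqref{conti-Tlambda}, and applying Lemma~\ref{lem: diff fixed point} completes the proof.
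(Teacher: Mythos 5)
Your proposal is correct and follows essentially the same route as the paper: the paper likewise recalls the contraction $\hat{\mathcal T}_\lambda$ from \cite[Theorem 3]{BDS}, invokes Remark~\ref{rem: L1 and L2 are general results} to transfer Lemma~\ref{lem: diff fixed point} to the discrete setting (its Lemma~\ref{lem: diff fixed point disc}), and then proves discrete analogues of Lemmas~\ref{lem: diff T z}, \ref{lem-diff T lambda} and \ref{estimate-high-deriva} with the same inductive derivative formulas and the same $C\epsilon_{n}$-weighted remainder estimates summed against \eqref{L disc}. The only difference is cosmetic (index conventions in the definition of $\hat{\mathcal T}_\lambda$), so no further comparison is needed.
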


The proof of this theorem is very similar to the proof of its continuous time version Theorem \ref{thm-diff}. For this reason, at some steps of the proof, we will only provide a sketch of the argument. Let us start with the proof of Theorem \ref{thm-diff disc} by recalling some constructions from the proof of \cite[Theorem 3]{BDS}.

Let
\[
\hat{\mathcal Y}:=\bigg \{ \textbf{z}=(z_n)_{n\in \Z} \subset X:  \ \|\textbf{z}\|_\infty:=\sup_{n\in \Z} |z_n| <+\infty \bigg \}.
\]
Then, $(\hat{\mathcal Y}, \|\cdot \|_\infty)$ is a Banach space. For $\textbf z\in \hat{\mathcal Y}$, we set 
\begin{equation*}
\begin{split}
    (\hat{\mathcal T}_\lambda \textbf{z})_n&=\sum_{k\in \Z } \hat{\mathcal G}(n,k)(A_{k-1}y^\lambda_{k-1}+f_\lambda (k-1, z_{k-1}+y^\lambda_{k-1})-y^\lambda_{k}),  \\
\end{split}
\end{equation*}
for  $n\in \Z$. It is observed in \cite[Theorem 3]{BDS} that $\hat{\mathcal{T}}_\lambda$ is well-defined and, moreover, that it is a contraction on \[\hat{\mathcal D}:= \left \{ \textbf{z} \in \hat{\mathcal Y}: \|\textbf{z}\|_\infty \le \frac{\hat L}{1-\hat q}\right \},\] 
where $\hat{L}$ and $\hat{q}$ are given in \eqref{L disc} and \eqref{contr disc} respectively.
In particular, it has a unique fixed point $\textbf{z}^\lambda \in \hat{\mathcal D}$ such that $\hat{\mathcal T}_\lambda \textbf{z}^\lambda=\textbf{z}^\lambda$. Finally, it is observed that $\textbf{x}^\lambda=\textbf{y}^\lambda+\textbf{z}^\lambda$ is a solution of \eqref{eq: semilinear disc} satisfying \eqref{sol disc}. In particular, in order to prove Theorem \ref{thm-diff disc}, it remains to show that  $\lambda\to \textbf{z}^\lambda$ is of class $C^k$. With this purpose in mind we present some auxiliary results.

\begin{lemma}\label{lem: diff fixed point disc}
Let $k\in \N$. Suppose that the map $\Sigma \times\hat{\mathcal Y}  \ni ( \lambda,\textbf{z})\mapsto \hat{\mathcal{T}}_{\lambda}\textbf{z}$ is $C^k$ on an open set $\hat{\mathcal W}$ containing the set $\hat{\mathcal S}:=\{(\lambda,\textbf{z}^{\bar\lambda}) :\lambda, \bar\lambda\in \Sigma \}$.
Then, $\Sigma \ni \lambda \mapsto \textbf{z}^{\lambda}$ is also $C^k$.
\end{lemma}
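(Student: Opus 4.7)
The plan is to observe that this lemma is simply the discrete-time incarnation of Lemma~\ref{lem: diff fixed point} and that, as pointed out in Remark~\ref{rem: L1 and L2 are general results}, the argument used there only exploited the abstract fact that $z_\lambda$ is the unique fixed point of a uniform contraction $\mathcal T_\lambda$ on a Banach space, and nothing about the specific integral form of the operator. Since in the present setting $\mathbf{z}^\lambda$ plays exactly the role of $z_\lambda$ and $\hat{\mathcal T}_\lambda$ plays the role of $\mathcal T_\lambda$, the same argument applies verbatim.

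More precisely, I would proceed in three steps. First, I verify the abstract setup: $(\hat{\mathcal Y},\|\cdot\|_\infty)$ is a Banach space, $\hat{\mathcal D}\subset \hat{\mathcal Y}$ is closed and invariant under $\hat{\mathcal T}_\lambda$, and by the proof of~\cite[Theorem 3]{BDS} recalled above, $\hat{\mathcal T}_\lambda\colon \hat{\mathcal D}\to \hat{\mathcal D}$ is a contraction with Lipschitz constant $\hat q<1$ that is uniform in $\lambda\in\Sigma$, with unique fixed point $\mathbf{z}^\lambda\in \hat{\mathcal D}$. Second, I establish continuity of $\lambda\mapsto \mathbf{z}^\lambda$ by the same telescoping estimate as in Lemma~\ref{lem: cont fixed point}: writing
\[
\|\mathbf{z}^{\lambda+h}-\mathbf{z}^\lambda\|_\infty=\|\hat{\mathcal T}_{\lambda+h}\mathbf{z}^{\lambda+h}-\hat{\mathcal T}_\lambda \mathbf{z}^\lambda\|_\infty \le \hat q\|\mathbf{z}^{\lambda+h}-\mathbf{z}^\lambda\|_\infty+\|\hat{\mathcal T}_{\lambda+h}\mathbf{z}^\lambda-\hat{\mathcal T}_\lambda \mathbf{z}^\lambda\|_\infty,
\]
so that
\[
\|\mathbf{z}^{\lambda+h}-\mathbf{z}^\lambda\|_\infty\le \frac{1}{1-\hat q}\|\hat{\mathcal T}_{\lambda+h}\mathbf{z}^\lambda-\hat{\mathcal T}_\lambda \mathbf{z}^\lambda\|_\infty,
\]
and the right-hand side tends to $0$ as $|h|\to 0$ by the $C^k$ (in particular continuity) hypothesis on $\hat{\mathcal T}_\lambda \mathbf{z}$.

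Third, I carry out the same induction as in the proof of Lemma~\ref{lem: diff fixed point}: from $\mathbf{z}^\lambda=\hat{\mathcal T}_\lambda \mathbf{z}^\lambda$, the mean-value theorem, the continuity of $\lambda\mapsto \mathbf{z}^\lambda$ just established, and the a priori estimate
\[
\bigg\|\frac{\partial}{\partial \mathbf{z}}\hat{\mathcal T}_\lambda \mathbf{z}^\lambda\bigg\|\le \hat q<1
\]
(which follows, as in~\eqref{exp}, from the uniform contraction property of $\hat{\mathcal T}_\lambda$), one gets that $\Id-\frac{\partial}{\partial \mathbf{z}}\hat{\mathcal T}_\lambda \mathbf{z}^\lambda$ is invertible and
\[
\frac{\partial}{\partial \lambda}\mathbf{z}^\lambda=\Bigl(\Id-\frac{\partial}{\partial \mathbf{z}}\hat{\mathcal T}_\lambda \mathbf{z}^\lambda\Bigr)^{-1}\frac{\partial}{\partial \lambda}\hat{\mathcal T}_\lambda \mathbf{z}^\lambda,
\]
which is continuous in $\lambda$. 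The higher-order formula, analogous to~\eqref{high-z-lambda},
\[
\frac{\partial^i}{\partial \lambda^i}\mathbf{z}^\lambda=\Bigl(\Id-\frac{\partial}{\partial \mathbf{z}}\hat{\mathcal T}_\lambda \mathbf{z}^\lambda\Bigr)^{-1}\hat{\mathcal F}(\lambda),
\]
is then obtained by the same inductive differentiation as in Lemma~\ref{lem: diff fixed point}, using the $C^k$ hypothesis on $(\lambda,\mathbf{z})\mapsto \hat{\mathcal T}_\lambda \mathbf{z}$ together with the inductive assumption that $\frac{\partial^j}{\partial \lambda^j}\mathbf{z}^\lambda$ is continuous for $j\le m$.

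Since every estimate in the proof of Lemma~\ref{lem: diff fixed point} relied only on the abstract contraction and $C^k$ properties, and not on the concrete integral expression of $\mathcal T_\lambda$, no genuine new obstacle arises here; the only thing to be careful about is to consistently replace the Banach space $\mathcal Y$, the domain $\mathcal D$, the contraction $\mathcal T_\lambda$, the fixed point $z_\lambda$ and the set $\mathcal S$ by their hatted counterparts throughout, and to ensure that the open set $\hat{\mathcal W}$ is used in place of $\mathcal W$. The proof is then complete.
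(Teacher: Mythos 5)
Your proposal is correct and follows exactly the paper's route: the paper proves this lemma in one line by invoking Lemma~\ref{lem: diff fixed point} together with Remark~\ref{rem: L1 and L2 are general results}, which is precisely the observation you make (and then elaborate). Your additional verification of the abstract contraction setup is sound but not a different argument.
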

\begin{proof}
This result follows from Lemma \ref{lem: diff fixed point} and Remark \ref{rem: L1 and L2 are general results}.
\end{proof}

\begin{lemma} \label{lem: diff T z disc}
Suppose that the assumptions of Theorem \ref{thm-diff disc} hold. Then, $\hat{\mathcal{Y}}\ni \textbf{z}\mapsto \mathcal{T}_{\lambda}\textbf{z}$ is $C^k$ on an open set $\hat{\mathcal W}$ containing $\hat{\mathcal S}$, where $\hat{\mathcal S}$ is as in Lemma \ref{lem: diff fixed point disc}.
\end{lemma}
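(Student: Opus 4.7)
The plan is to adapt the proof of Lemma~\ref{lem: diff T z} to the discrete-time setting, replacing integrals over $\R$ with sums over $\Z$ and the kernel $\mathcal{G}(t,\tau)$ with $\hat{\mathcal{G}}(n,k)$. Fix a point $(\lambda_0,\textbf{z}^{\bar\lambda_0})\in \hat{\mathcal S}$; it suffices to exhibit an open neighborhood $\mathcal N$ of this point on which $\textbf{z}\mapsto \hat{\mathcal T}_\lambda \textbf{z}$ is $C^k$. I would proceed by induction on $1\le i\le k$, showing that on $\mathcal N$ the $i$-th partial derivative in $\textbf{z}$ exists and is given by
\[
\left(\left(\tfrac{\partial^i}{\partial \textbf{z}^i}\hat{\mathcal{T}}_\lambda \textbf{z}\right)(\bm\eta^1,\ldots,\bm\eta^i)\right)_n = \sum_{k\in \Z}\hat{\mathcal{G}}(n,k)\left(\tfrac{\partial^i}{\partial x^i}f_\lambda(k-1,y^\lambda_{k-1}+z_{k-1})\right)(\eta^1_{k-1},\ldots,\eta^i_{k-1}),
\]
for $\bm\eta^j=(\eta^j_n)_{n\in \Z}\in \hat{\mathcal Y}$.

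For the base case $i=1$, I would write $\hat{\mathcal T}_{\lambda_0}(\textbf{z}^{\bar\lambda_0}+\bm\eta)-\hat{\mathcal T}_{\lambda_0}\textbf{z}^{\bar\lambda_0}$ as $\sum_k \hat{\mathcal{G}}(n,k)\int_0^1 \frac{\partial}{\partial x}f_{\lambda_0}(k-1,y^{\lambda_0}_{k-1}+z^{\bar\lambda_0}_{k-1}+\theta\eta_{k-1})\eta_{k-1}\,d\theta$ and split off the linear-in-$\bm\eta$ part. The remainder $\hat\alpha_1(k-1)$ is then estimated by the fundamental theorem of calculus applied a second time, using \eqref{bounded-deriva-f disc} with $i=2$ to bound $\|\partial^2 f_{\lambda_0}/\partial x^2\|$ by $C\epsilon_{k-1}$; this gives $|\hat\alpha_1(k-1)|\le C\epsilon_{k-1}|\eta_{k-1}|^2$. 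Then \eqref{L disc} yields
\[
\frac{1}{\|\bm\eta\|_\infty}\sup_{n}\Big|\sum_{k\in\Z}\hat{\mathcal{G}}(n,k)\hat\alpha_1(k-1)\Big|\le C\hat L\,\|\bm\eta\|_\infty\to 0,
\]
so the derivative exists and takes the asserted form. The Lipschitz estimate \eqref{lip disc} (together with \eqref{contr disc}) guarantees convergence of the sum defining the linear term, analogously to \eqref{eq: est deriv f lambda} in the continuous case. Essentially the same calculation, carried out at a general point $(\lambda,\textbf{z})\in\mathcal N$, gives the same formula throughout $\mathcal N$.

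For the induction step, assuming the formula for $i=j<k$, I would subtract the expressions at $\textbf{z}^{\bar\lambda_0}+\bm\eta$ and $\textbf{z}^{\bar\lambda_0}$, apply the fundamental theorem of calculus in the $x$-variable of $\partial^j f_\lambda/\partial x^j$, and use \eqref{bounded-deriva-f disc} with $i=j+2$ to estimate the remainder by $C\epsilon_{k-1}|\eta_{k-1}|^2$; summing against $\|\hat{\mathcal{G}}(n,k)\|$ and invoking \eqref{L disc} produces a bound of order $C\hat L\|\bm\eta\|_\infty\prod_\ell\|\bm\eta^\ell\|_\infty$, confirming \emph{a fortiori} the asserted formula at order $j+1$.

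Finally, I would verify continuity of each $\partial^i \hat{\mathcal T}/\partial \textbf{z}^i$ on $\mathcal N$. Fixing $(\lambda_0,\textbf{z}^{\bar\lambda_0})$, the difference $\partial^i \hat{\mathcal T}_{\lambda_0}(\textbf{z}^{\bar\lambda_0}+\bm\eta)/\partial \textbf{z}^i-\partial^i \hat{\mathcal T}_{\lambda_0}\textbf{z}^{\bar\lambda_0}/\partial \textbf{z}^i$ is estimated by a single application of the mean value theorem in $x$ and \eqref{bounded-deriva-f disc} with index $i+1$, yielding a bound proportional to $\hat L\|\bm\eta\|_\infty$; for a perturbation $\mu\in\Sigma$ of $\lambda_0$ the same argument, combined with the local bound \eqref{condi-bound-deri-y-lambda disc} on $\partial y^\lambda/\partial\lambda$, gives a bound proportional to $\hat L(1+\hat M_{\lambda_0})|\mu|$, exactly as in \eqref{conti-Tlambda}. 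The main (minor) obstacle is bookkeeping: since $f_\lambda$ is evaluated at the \emph{shifted} argument $k-1$ in the discrete cocycle, one must carry the index shift consistently through each estimate; but no new analytic ingredient is required beyond the discrete analogues of \eqref{contr disc} and \eqref{L disc} already assumed. This completes the induction and hence the proof.
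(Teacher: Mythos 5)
Your proposal is correct and follows essentially the same route as the paper: the same induction on the order of the derivative, the same fundamental-theorem-of-calculus decomposition with remainder bounded by $C\epsilon_{k-1}|\eta_{k-1}|^2$ via \eqref{bounded-deriva-f disc}, the same use of \eqref{L disc} to conclude the remainder is $\mathbf{o}(\|\boldsymbol{\eta}\|_\infty)$, and the same continuity estimates using \eqref{condi-bound-deri-y-lambda disc}. The paper itself only sketches this argument by reference to the continuous-time Lemma~\ref{lem: diff T z}, so your write-up is, if anything, slightly more explicit.
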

\begin{proof}
The proof of this result is very similar to the proof of Lemma \ref{lem: diff T z} and, for this reason, we only present a sketch of the argument. It suffices to prove that,  for any $(\lambda_0, \textbf{z}^{\bar\lambda_0})\in \hat{\mathcal S}$, there exists an open set $\hat{\mathcal N}\ni (\lambda_0, \textbf{z}^{\bar\lambda_0})$ on which the map $\textbf{z}\mapsto \hat{\mathcal{T}}_{\lambda}\textbf{z}$ is $C^k$.
We use induction to prove that on $\hat{\mathcal N}$, the $i$-th derivative of $\textbf{z}\mapsto \hat{\mathcal{T}}_{ \lambda}\textbf{z}$ is given by
\begin{align}\label{high-deriva disc}
&\left(\left(\frac{\partial^i}{\partial \textbf{z}^i} \hat{\mathcal{T}}_{\lambda} \textbf{z} \right) (\boldsymbol{\eta}^1,\ldots,\boldsymbol{\eta}^i)\right)_n
\notag\\
=&\sum_{k\in \Z } \hat{\mathcal G}(n,k) \left(\frac{\partial^i}{\partial x^i} f_{\lambda}(k-1, y^{\lambda}_{k-1} + z_{k-1})\right)(\eta^1_{k-1} ,\ldots,\eta^i_{k-1})   
\end{align} 
for $1\le i\le k$, where $\boldsymbol{\eta}^j=(\eta^j_{n})_{n\in \Z}\in \hat{\mathcal Y}$ for $1\le j\le i$ .

By arguing as in~\eqref{exp}, it follows from~\eqref{lip disc} that 
\begin{equation}\label{eq: est deriv f lambda disc}
\begin{split}
\sup_{\lambda\in \Sigma, x\in X}\left\|\frac{\partial}{\partial x} f_\lambda (n,x)\right\|\le c_n \text{ for every } n\in \Z.
\end{split}
\end{equation}
We first show  that \eqref{high-deriva disc}  holds at the point $(\lambda_0, \textbf{z}^{\bar\lambda_0})$.
Using the definition of $\hat{\mathcal T}_\lambda$ and the first assumption of Theorem \ref{thm-diff disc}, we can show that for $\boldsymbol{\eta} \in \hat{\mathcal Y}$ with sufficiently small $\|\boldsymbol{\eta}\|_\infty$,
\begin{align}\label{eq: deriv of T lambda z disc}
&\left( \hat{\mathcal T}_{\lambda_0} (\textbf{z}^{\bar \lambda_0}+\boldsymbol{\eta} ) \right)_n-\left(\hat{\mathcal T}_{\lambda_0} \textbf{z}^{\bar \lambda_0}\right)_n
\notag
\\
&=\sum_{k\in \Z } \hat{\mathcal G}(n,k) \left(\frac{\partial}{\partial x} f_{\lambda_0}(k-1, y^{\lambda_0}_{k-1} + z^{\bar\lambda_0}_{k-1})\right)\eta_{k-1} 
+\sum_{k\in \Z } \hat{\mathcal G}(n,k)\alpha_{k-1},
\end{align}
where 
\begin{align*}
  \alpha_{k-1}&=\int_{0}^{1} \frac{\partial}{\partial x} f_{\lambda_0}(k-1, y^{\lambda_0}_{k-1} + z^{\bar\lambda_0}_{k-1}+\theta\eta_{k-1} ) \eta_{k-1} d\theta
  \\
  &\quad- \frac{\partial}{\partial x} f_{\lambda_0}(k-1, y^{\lambda_0}_{k-1} + z^{\bar\lambda_0}_{k-1} ) \eta_{k-1}.  
\end{align*}
Using again the first assumption of Theorem \ref{thm-diff disc}, $|\alpha_{k-1}|$ can be estimated as  
\begin{align}\label{alpha1-esti disc}
|\alpha_{k-1}|
\le C\epsilon_{k-1}|\eta_{k-1}|^2.
\end{align}
By \eqref{L disc} and \eqref{alpha1-esti disc}, we have that 
\begin{align*}
\frac{1}{\|\boldsymbol{\eta}\|_{\infty}}\left |\sum_{k\in \Z } \hat{\mathcal G}(n,k)\alpha_{k-1}   \right |  \le C \sum_{k\in \Z } \|\hat{\mathcal G}(n,k)\|\epsilon_{k-1}|\eta_{k-1}| \le C\hat L\|\boldsymbol{\eta}\|_\infty,
\end{align*}
for every $n\in\Z$, and consequently 
\begin{equation*}
\frac{1}{\|\boldsymbol{\eta}\|_{\infty}}\sup_{n\in\Z}\left |\sum_{k\in \Z } \hat{\mathcal G}(n,k)\alpha_{k-1}   \right | \xrightarrow{\|\boldsymbol{\eta}\|_{\infty}\to 0} 0. 
\end{equation*}
This fact combined with \eqref{eq: deriv of T lambda z disc} implies that $\textbf{z}\mapsto  \hat{\mathcal{T}}_{\lambda_0}\textbf{z}$ is differentiable at $\textbf{z}^{\bar\lambda_0}$ and that its derivative is given by
\begin{equation}\label{first-deriv-form disc}
\left(\left(\frac{\partial}{\partial \textbf{z}} \hat{\mathcal{T}}_{\lambda_0}\textbf{z}^{\bar\lambda_0} \right)\boldsymbol{\eta} \right)_n  =\sum_{k\in \Z } \hat{\mathcal G}(n,k) \left(\frac{\partial}{\partial x} f_{\lambda_0}(k-1, y^{\lambda_0}_{k-1} + z^{\bar\lambda_0}_{k-1} )\right)\eta_{k-1}  
\end{equation}
for $n\in \Z$ and $\boldsymbol{\eta}=(\eta_n)_{n\in \Z} \in \hat{\mathcal Y}$.

Similarly, we can show that $\textbf{z}\mapsto  \hat{\mathcal{T}}_{\lambda}\textbf{z}$ is also differentiable at every point  in the neighborhood $\hat{\mathcal N}$ of $(\lambda_0, \textbf{z}^{\bar\lambda_0})$ and the derivative has the same form as in \eqref{first-deriv-form disc}.

Assume that \eqref{high-deriva disc} holds for $i=j$ with $1\le j\le k-1$. Then, using the inductive assumption and the first hypothesis of Theorem \ref{thm-diff disc},  for $\boldsymbol{\eta}\in \hat{\mathcal Y}$ with $\|\boldsymbol{\eta}\|_\infty$ sufficiently small,  we can show that 
\begin{align}\label{induc-estima disc}
&\left( \frac{\partial^j}{\partial \textbf{z}^j}\hat{\mathcal T}_{\lambda_0} (\textbf{z}^{\bar \lambda_0}+\boldsymbol{\eta}) (\boldsymbol{\eta}^1,\ldots,\boldsymbol{\eta}^j)\right)_n-\left(\frac{\partial^j}{\partial \textbf{z}^j}\hat{\mathcal T}_{\lambda_0} \textbf{z}^{\bar \lambda_0}(\boldsymbol{\eta}^1,\ldots,\boldsymbol{\eta}^j)\right)_n 
\notag\\
&= \sum_{k\in \Z } \hat{\mathcal G}(n,k) \left(\frac{\partial^{j+1}}{\partial x^{j+1}} f_{\lambda_0}(k-1, y^{\lambda_0}_{k-1} + z^{\bar\lambda_0}_{k-1} )\eta_{k-1}\right) (\eta^1 _{k-1},\ldots,\eta^j_{k-1}) 
\notag\\
&\qquad+\sum_{k\in \Z } \hat{\mathcal G}(n,k) \left(\tilde \alpha_{k-1}\right) (\eta^1 _{k-1},\ldots,\eta^j_{k-1}),
\end{align}
where $\boldsymbol{\eta}^\ell=(\eta^\ell_n )_{n\in \Z}\in \hat{\mathcal Y}$ for all $1\le\ell\le j$ and 
\begin{align*}
\tilde \alpha_{k-1} &= \int_{0}^{1}\left(\frac{\partial^{j+1}}{\partial x^{j+1}} f_{\lambda_0}(k-1, y^{\lambda_0}_{k-1} + z^{\bar\lambda_0}_{k-1} + \theta \eta_{k-1}) \eta_{k-1}\right)d\theta 
\\
&\quad- \frac{\partial^{j+1}}{\partial x^{j+1}} f_{\lambda_0}(k-1, y^{\lambda_0}_{k-1} + z^{\bar\lambda_0}_{k-1} )\eta_{k-1}.
\end{align*}
Using again the first hypothesis of Theorem \ref{thm-diff disc} we can show that
\begin{align*}
&\|\tilde \alpha_{k-1}\|\le 
C\epsilon_{k-1}|\eta_{k-1}|^2.
\end{align*}
Then, we have that 
\begin{align*}
&\frac{1}{\|\boldsymbol{\eta}\|_\infty\|\boldsymbol{\eta}^1\|_\infty\cdots \|\boldsymbol{\eta}^j\|_\infty} \sup_{n\in \Z} \left | \sum_{k\in \Z } \hat{\mathcal G}(n,k) \left(\tilde \alpha_{k-1}\right) (\eta^1_{k-1} ,\ldots,\eta^j_{k-1}) \right |
\le C\hat L\| \boldsymbol{\eta}\|_\infty.
\end{align*}
This together with \eqref{induc-estima disc} implies that the results in \eqref{high-deriva disc} holds at the point $(\lambda_0,\textbf{z}_{\bar\lambda_0})$ for $i=j+1$.

Similarly, we can show that $\textbf{z} \mapsto  \hat{\mathcal{T}}_{\lambda}\textbf{z}$ is also differentiable of order $j+1$ at every point in the neighborhood $\hat{\mathcal N}$ of $(\lambda_0, \textbf{z}^{\bar\lambda_0})$ and that the $j+1$-th derivative has the same form as in \eqref{high-deriva disc}. Thus, by induction, \eqref{high-deriva disc} is proved.

Now, we show that  $\frac{\partial^i}{\partial \textbf{z}^i} \hat{\mathcal{T}}$ is continuous on $\hat{\mathcal N}$ for all $1\le i\le k$. Without loss of generality, we only show the continuity at $(\lambda_0, \textbf{z}^{\bar\lambda_0})$. In fact, fixing any $1\le i\le k$,
using the assumptions of Theorem \ref{thm-diff disc} and \eqref{L disc} we can show that there exists $\hat M_{\lambda_0}>0$ such that for every $\boldsymbol{\eta}\in \hat {\mathcal Y}$ and $\mu\in\Sigma$ sufficiently small, 
\begin{align*}
&\left\| \frac{\partial^i}{\partial \textbf{z}^i} \hat{\mathcal T}_{\lambda_0+\mu}(\textbf{z}^{\bar\lambda_0}+\boldsymbol{\eta}) - \frac{\partial^i}{\partial \textbf{z}^i} \hat{\mathcal T}_{\lambda_0}\textbf{z}^{\bar\lambda_0} \right\| 
 \le C(\hat M_{\lambda_0}+1)\hat L (|\mu|+\|\boldsymbol{\eta}\|_\infty).
\end{align*}
Hence,  it follows that $\frac{\partial^i}{\partial \textbf{z}^i}\hat{ \mathcal{T}}$ is continuous at $(\lambda_0,\textbf{z}^{\bar\lambda_0})$ and it is proved that the map $\textbf{z}\mapsto \hat{\mathcal T}_\lambda \textbf{z}$ is $C^k$ on $\hat{\mathcal N}$. 
This completes the proof of the lemma.
\end{proof}

\begin{lemma}\label{lem-diff T lambda disc}
Suppose we are in the hypotheses of Theorem \ref{thm-diff disc}. Then, $\Sigma\ni \lambda\mapsto \hat{\mathcal{T}}_{\lambda} \textbf{z}$ is $C^k$ on an open set $\hat{\mathcal W}$ containing $\hat{\mathcal S}$, where $\hat{\mathcal S}$ is as in Lemma \ref{lem: diff fixed point disc}. 
\end{lemma}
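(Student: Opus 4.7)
The plan is to proceed in complete analogy with the proof of Lemma \ref{lem-diff T lambda}, replacing integrals over $\R$ by sums over $\Z$ and using the discrete hypotheses \eqref{bounded-deriva-f disc}, \eqref{pse-derivatives disc} and \eqref{condi-bound-deri-y-lambda disc} in place of their continuous counterparts. Fix an arbitrary $(\lambda_0,\textbf{z}^{\bar\lambda_0})\in\hat{\mathcal S}$ and introduce the shorthand
\[
\hat{\mathcal M}_k(\lambda,\bar\lambda)
:=A_{k-1}y^\lambda_{k-1}+f_\lambda(k-1,y^\lambda_{k-1}+z^{\bar\lambda}_{k-1})-y^\lambda_k,
\]
so that $(\hat{\mathcal T}_\lambda\textbf{z}^{\bar\lambda})_n=\sum_{k\in\Z}\hat{\mathcal G}(n,k)\hat{\mathcal M}_k(\lambda,\bar\lambda)$. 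The analogue of Lemma \ref{estimate-high-deriva} to be established first states that, for each $2\le i\le k+1$ and every $\mu$ in a sufficiently small neighborhood of $\lambda_0$ (on which \eqref{condi-bound-deri-y-lambda disc} yields a uniform bound $\hat M_*$),
\[
\Bigl\|\bigl\{\tfrac{d^i}{d\lambda^i} f_\lambda(k-1,y^\lambda_{k-1}+z_{k-1})-\tfrac{d^i}{d\lambda^i}f_\lambda(k-1,y^\lambda_{k-1})\bigr\}\big|_{\lambda_0+\mu}\Bigr\|
\le C\epsilon_{k-1}\hat N\hat M_*^i\|\textbf{z}\|_\infty,
\]
for some constant $\hat N=\hat N(i)$; this follows from expanding $\frac{d^i}{d\lambda^i}$ by the Faà di Bruno chain rule, noting that each resulting summand factors as $\frac{\partial^m}{\partial\lambda^{m-\ell}\partial x^\ell}f_\lambda$ composed with products of $\frac{\partial^{j_s}}{\partial\lambda^{j_s}}y^\lambda_{k-1}$, and then using \eqref{bounded-deriva-f disc} and a single mean value step in the $x$-slot.

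With that estimate in hand, I prove by induction on $1\le i\le k$ that on a neighborhood $\hat{\mathcal V}$ of $(\lambda_0,\textbf{z}^{\bar\lambda_0})$ the $i$-th partial derivative of $\lambda\mapsto \hat{\mathcal T}_\lambda\textbf{z}$ has the form
\[
\Bigl(\bigl(\tfrac{\partial^i}{\partial\lambda^i}\hat{\mathcal T}_\lambda \textbf{z}\bigr)(\mu_1,\dots,\mu_i)\Bigr)_n
=\sum_{k\in\Z}\hat{\mathcal G}(n,k)\,\tfrac{\partial^i}{\partial\lambda^i}\hat{\mathcal M}_k(\lambda,\bar\lambda)\,(\mu_1,\dots,\mu_i),
\]
where the $\lambda$-derivative of $\hat{\mathcal M}_k$ is read via \eqref{pse-derivatives disc} and the chain rule. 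The base case $i=1$ is obtained by writing
\[
(\hat{\mathcal T}_{\lambda_0+\mu}\textbf{z}^{\bar\lambda_0})_n-(\hat{\mathcal T}_{\lambda_0}\textbf{z}^{\bar\lambda_0})_n
=\sum_{k\in\Z}\hat{\mathcal G}(n,k)\int_0^1\tfrac{\partial}{\partial\lambda}\hat{\mathcal M}_k(\lambda_0+\theta\mu,\bar\lambda_0)\mu\,d\theta
\]
and then isolating the linear term; the residual is a sum of terms of the type $\sum_k\hat{\mathcal G}(n,k)\beta_{k-1}$ with $|\beta_{k-1}|\le C\epsilon_{k-1}(1+\hat N\hat M_*^2\|\textbf{z}^{\bar\lambda_0}\|_\infty)|\mu|^2$, which by \eqref{L disc} is $\textbf{o}(\mu)$ uniformly in $n$. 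The inductive step is the same computation applied to $\frac{\partial^j}{\partial\lambda^j}\hat{\mathcal M}_k$ and controlled by the preceding display with $i$ replaced by $j+2$.

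Continuity of each $\frac{\partial^i}{\partial\lambda^i}\hat{\mathcal T}$ on $\hat{\mathcal V}$ is then obtained, as in the continuous case around \eqref{conti-Tlambda}, by a direct mean-value estimate showing
\[
\bigl\|\tfrac{\partial^i}{\partial\lambda^i}\hat{\mathcal T}_{\lambda_0+\mu}\textbf{z}^{\bar\lambda_0}-\tfrac{\partial^i}{\partial\lambda^i}\hat{\mathcal T}_{\lambda_0}\textbf{z}^{\bar\lambda_0}\bigr\|
\le C\hat L(1+\hat N\hat M_*^{i+1}\|\textbf{z}^{\bar\lambda_0}\|_\infty)|\mu|,
\]
together with the analogous estimate $C\hat L\hat N\hat M_*^{i+1}\|\boldsymbol{\eta}\|_\infty$ for a perturbation of $\textbf{z}^{\bar\lambda_0}$, both via the discrete Faà di Bruno expansion and the auxiliary lemma above. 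The main obstacle I anticipate is purely bookkeeping: cleanly expressing the Faà di Bruno expansion of $\frac{d^i}{d\lambda^i}f_\lambda(k-1,y^\lambda_{k-1}+z_{k-1})$ so that every factor is uniformly summable against $\|\hat{\mathcal G}(n,\cdot)\|\epsilon_{\cdot-1}$ via \eqref{L disc}; everything else is a direct discrete transcription of the arguments already carried out for Lemma \ref{lem-diff T lambda}.
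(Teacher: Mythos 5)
Your proposal is correct and follows essentially the same route as the paper: the paper likewise isolates the discrete analogue of Lemma \ref{estimate-high-deriva} (its Lemma \ref{estimate-high-deriva disc}, proved via the same Fa\`a di Bruno expansion bounded by \eqref{bounded-deriva-f disc} and \eqref{condi-bound-deri-y-lambda disc}), then runs the same induction on $i$ with the integral mean-value representation of $\hat{\mathcal M}$, the same remainder bound $C\epsilon_{k-1}(1+N\hat M^{2}\|\textbf{z}^{\bar\lambda_0}\|_\infty)|\mu|^2$ summed against $\|\hat{\mathcal G}(n,\cdot)\|$ via \eqref{L disc}, and the same mean-value continuity estimates at the end. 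No gaps.
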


In order to prove this lemma we need the following auxiliary result whose proof is completely analogous to the proof of Lemma \ref{estimate-high-deriva} and therefore we omit it.

\begin{lemma}\label{estimate-high-deriva disc}
Fix $\lambda_*\in \Sigma$ and $2\le i\le k+1$ and suppose that conditions \eqref{bounded-deriva-f disc} and \eqref{condi-bound-deri-y-lambda disc} are satisfied. Then, there exists $N>0$ such that for every $\mu\in \Sigma$ with sufficiently small $|\mu|$ and for all $n\in \mathbb Z$,
\begin{equation}\label{high-deri-estim disc}
 \bigg\|\bigg\{\frac{d^i}{d\lambda^i} f_\lambda(n, y^\lambda_n + z_n) 
- \frac{d^i}{d\lambda^i} f_\lambda(n, y^\lambda_n )\bigg\}\bigg|_{\lambda_* +\mu}\bigg\|
\le C\epsilon_{n}N\hat M_*^i\|\textbf{z}\|_\infty,
\end{equation}
where $\textbf{z}=(z_n)_{n\in \mathbb Z}\in \hat{\mathcal{Y}}$ and $\hat M_*>1$ is a constant such that 
\begin{equation}\label{bound-deriv-y-lambda-d}
    \max_{1\le j\le k+1} \sup_{n\in\mathbb Z}\bigg\|\frac{\partial^j}{\partial\lambda^j}y^{\lambda_*+\mu}_n\bigg\|\le \hat M_*
\end{equation}
for every $\mu \in \Sigma$ with sufficiently small $|\mu|$ whose existence is given by \eqref{condi-bound-deri-y-lambda disc}.
\end{lemma}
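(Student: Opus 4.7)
The plan is to repeat the proof of Lemma \ref{estimate-high-deriva} verbatim in the discrete-time setting, replacing $\tau\in\R$ by $n\in\Z$, $\epsilon(\tau)$ by $\epsilon_n$, and the continuous maps $y_\lambda(\tau),z(\tau)$ by the sequence entries $y^\lambda_n,z_n$. Nothing in the continuous argument is specific to the continuum: it relies only on Fa\`a di Bruno's chain rule for the composition $\lambda\mapsto f_\lambda(n,y^\lambda_n+z_n)$ and on pointwise upper bounds on partial derivatives of $f_\lambda$ and $y^\lambda_n$, both of which are supplied by \eqref{bounded-deriva-f disc} and \eqref{condi-bound-deri-y-lambda disc}.

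First I would apply the chain rule to write $\frac{d^i}{d\lambda^i} f_\lambda(n,y^\lambda_n+z_n)$ as a finite sum --- of cardinality depending only on $i$ --- of terms of the form
$$
\frac{\partial^m}{\partial \lambda^{m-\ell}\partial x^\ell}  f_\lambda(n, y^\lambda_n + z_n) \Bigl( \frac{\partial ^{j_1}}{\partial\lambda^{j_1}}y^\lambda_n,\ldots,\frac{\partial ^{j_\ell}}{\partial\lambda^{j_\ell}}y^\lambda_n\Bigr),
$$
with $1\le m\le i$, $0\le\ell\le m$, and $j_1+\cdots+j_\ell=i-m+\ell$. The identical expansion, with the $x$-slot reduced from $y^\lambda_n+z_n$ to $y^\lambda_n$, holds for $\frac{d^i}{d\lambda^i} f_\lambda(n,y^\lambda_n)$. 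Next I would pair the matched terms and apply the mean value inequality in the $x$-variable to each paired difference; this produces, for each summand, the upper bound
$$
\sup_{0\le\theta\le 1}\Bigl\|\frac{\partial^{m+1}}{\partial \lambda^{m-\ell}\partial x^{\ell+1}}  f_{\lambda_*+\mu}(n, y^{\lambda_*+\mu}_n +\theta z_n)\Bigr\| \cdot |z_n|\cdot \prod_{s=1}^{\ell}\Bigl\|\frac{\partial^{j_s}}{\partial\lambda^{j_s}}y^{\lambda_*+\mu}_n\Bigr\|.
$$

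At this stage hypothesis \eqref{bounded-deriva-f disc} controls the $f$-derivative by $C\epsilon_n$, while the constant $\hat M_*$ from \eqref{bound-deriv-y-lambda-d} (together with $\hat M_*>1$ and $\ell\le i$) bounds the product of $y$-factors by $\hat M_*^i$; since $|z_n|\le\|\textbf z\|_\infty$, each paired difference is therefore at most $C\epsilon_n\hat M_*^i\|\textbf z\|_\infty$. Summing over the finitely many summands of the Fa\`a di Bruno expansion and letting $N=N(i)$ absorb their number yields \eqref{high-deri-estim disc}. I do not anticipate any genuine obstacle, as no discrete-time-specific phenomenon (summability vs.\ integrability, etc.) intervenes --- the estimate is entirely pointwise in $n$. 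The only mildly delicate point is the combinatorial bookkeeping needed to verify that all orders of differentiation produced by the chain rule and the mean value step remain within the range covered by \eqref{bounded-deriva-f disc}, which is handled exactly as in the continuous counterpart.
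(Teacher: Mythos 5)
Your proposal is correct and coincides with the paper's intent: the paper omits this proof entirely, stating that it is completely analogous to that of Lemma~\ref{estimate-high-deriva}, and your argument is precisely that continuous-time proof (chain-rule expansion, pairwise mean-value estimate in the $x$-slot, bounds from \eqref{bounded-deriva-f disc} and \eqref{condi-bound-deri-y-lambda disc}, and absorption of the finitely many summands into $N$) transcribed pointwise in $n$.
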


\begin{proof}[Proof of Lemma \ref{lem-diff T lambda disc}]
The proof of this result is similar to that of Lemma \ref{lem-diff T lambda} and again we provide only a sketch of the argument.
For any $(\lambda_0, \textbf{z}^{\bar\lambda_0})\in \hat{\mathcal S}$, 
we use induction to prove that there exists an open neighborhood $\hat{\mathcal V}$ of $(\lambda_0, \textbf{z}^{\bar\lambda_0})$ on which the $i$-th derivative of $\lambda\mapsto \hat{\mathcal{T}}_{\lambda} \textbf{z}$ is given by
\begin{align}\label{high-deriva-zeta disc}
&\left(\left(\frac{\partial^i}{\partial \lambda^i} \hat{\mathcal{T}}_{ \lambda}\textbf{z} \right) (\mu_1,\ldots,\mu_i)\right)_n=\sum_{k\in \Z } \hat{\mathcal G}(n,k) \Bigg( A_{k-1} \frac{\partial^i}{\partial \lambda^i} y^\lambda_{k-1}
\notag\\
&\quad+
\frac{d^i}{d \lambda^i} f_{\lambda}(k-1, y^\lambda_{k-1} +  z_{k-1} )
- \frac{\partial^i}{\partial \lambda^i} y^\lambda_k\Bigg) (\mu_1 ,\ldots,\mu_i)  
\end{align} 
for $1\le i\le k$, where $\mu_j\in \Sigma$ for $1\le j\le i$.

We first show that  \eqref{high-deriva-zeta disc} holds at $(\lambda_0, \textbf{z}^{\bar\lambda_0})$. In order to simplify notations, define
\begin{align*}
\mathcal H(n,\lambda)&:=A_n y^\lambda_n + f_{\lambda}(n, y^\lambda_n)
-  y^\lambda_{n+1},
\\
{\mathcal M}(n,\lambda,\bar\lambda)&:=A_n y^\lambda_n + f_{\lambda}(n, y^\lambda_n +  z^{\bar\lambda}_n )-  y^\lambda_{n+1}.
\end{align*}

Using the definition of $\hat{\mathcal T}_\lambda$ and the first assumption of Theorem \ref{thm-diff disc}, one can show that for $\mu \in \Sigma$ with sufficiently small $|\mu|$,
\begin{align}\label{eq: deriv of T zeta disc}
\left( \hat{\mathcal T}_{\lambda_0+\mu} \textbf{z}^{\bar \lambda_0} \right)_n-\left(\hat{\mathcal T}_{\lambda_0} \textbf{z}^{\bar \lambda_0}\right)_n
&=\sum_{k\in \Z } \hat{\mathcal G}(n,k) \frac{\partial}{\partial\lambda}{\mathcal M}(k-1,\lambda_0,\bar\lambda_0) \mu   
\nonumber\\
&\phantom{=}+\sum_{k\in \Z } \hat{\mathcal G}(n,k)\beta_{k-1},
\end{align}
where
$$
\beta_{k-1}= \int_{0}^{1}\frac{\partial}{\partial\lambda}{\mathcal M}(k-1,\lambda_0+\theta \mu,\bar\lambda_0) \mu d\theta-\frac{\partial}{\partial\lambda}{\mathcal M}(k-1,\lambda_0,\bar\lambda_0) \mu.
$$
By the assumptions of Theorem \ref{thm-diff disc} and Lemma \ref{estimate-high-deriva disc} we can get that 
\begin{align}\label{beta-esti disc}
|\beta_{k-1}| \le C\epsilon_{k-1}(1+N\hat M_{\lambda_0}^2\|\textbf{z}^{\bar \lambda_0}\|_\infty)|\mu|^2.
\end{align}
Moreover, by \eqref{L disc} and \eqref{beta-esti disc}, we have
\begin{align*}
\frac{1}{|\mu|}\left |\sum_{k\in \Z } \hat{\mathcal G}(n,k)\beta_{k-1}   \right |  &\le C(1+N\hat M_{\lambda_0}^2\|\textbf{z}^{\bar \lambda_0}\|_\infty)\sum_{k\in \Z } \|\hat{\mathcal G}(n,k)\|\epsilon_{k-1}|\mu| \\
&\le C(1+N\hat M_{\lambda_0}^2\|\textbf{z}^{\bar \lambda_0}\|_\infty)\hat L |\mu|, 
\end{align*}
for every $n\in\Z$,
which implies  that  
\begin{equation*}
\frac{1}{|\mu|}\sup_{n\in \Z}\left |\sum_{k\in \Z } \hat{\mathcal G}(n,k)\beta_{k-1}   \right | \xrightarrow{|\mu|\to 0} 0. 
\end{equation*}
This fact combined with \eqref{eq: deriv of T zeta disc} implies that $\lambda\mapsto  \hat{\mathcal{T}}_{\lambda} \textbf{z}^{\bar \lambda}$ is differentiable at $(\lambda_0,\mathbf{z}^{\bar \lambda_0})$ and that its derivative is given by
\begin{equation}\label{first-deriv-form-zeta disc}
\left(\left(\frac{\partial}{\partial \lambda} \hat{\mathcal{T}}_{\lambda_0}\textbf{z}^{\bar\lambda_0} \right)\mu \right)_n =\sum_{k\in \Z } \hat{\mathcal G}(n,k) \frac{\partial}{\partial\lambda}{\mathcal M}(k-1,\lambda_0,\bar\lambda_0) \mu     
\end{equation}
for $n\in \Z$ and $\mu \in \Sigma$.

Similarly, we can show that $\lambda\mapsto  \hat{\mathcal{T}}_{\lambda}\textbf{z} $ is also differentiable at every point  in the neighborhood $\hat{\mathcal V}$ of $(\lambda_0, \textbf{z}^{\bar\lambda_0})$ and the derivative has the same form as in \eqref{first-deriv-form-zeta disc}.

Assume that \eqref{high-deriva-zeta disc} holds for $i=j$ with $1\le j\le k-1$. Then, using the inductive assumption and the hypotheses of Theorem \ref{thm-diff disc}, we can show that for $\mu \in \Sigma$ with sufficiently small $|\mu|$,
\begin{align}\label{induc-estima-zeta disc}
&\left( \frac{\partial^j}{\partial \lambda^j}\hat{\mathcal T}_{\lambda_0+\mu} \textbf{z}^{\bar \lambda_0} (\mu_1,\ldots,\mu_j)\right)_n-\left(\frac{\partial^j}{\partial \lambda^j}\hat{\mathcal T}_{\lambda_0 } \textbf{z}^{\bar \lambda_0}(\mu_1,\ldots,\mu_j)\right)_n
\notag\\
&= \sum_{k\in \Z } \hat{\mathcal G}(n,k) \bigg\{\bigg(\frac{\partial^{j+1}}{\partial\lambda^{j+1}}\mathcal M(k-1,\lambda_0,\bar\lambda_0)
\bigg)\mu \bigg\} (\mu_1 ,\ldots,\mu_j)  
\notag\\
&\qquad+\sum_{k\in \Z } \hat{\mathcal G}(n,k) \left(\tilde \beta_{k-1}\right) (\mu_1 ,\ldots,\mu_j),
\end{align}
where 
\begin{equation*}
\begin{split}
\tilde \beta_{k-1} &= \int_{0}^{1}\bigg(\frac{\partial^{j+1}}{\partial\lambda^{j+1}}\mathcal M(k-1,\lambda_0+\theta\mu,\bar\lambda_0)
 \bigg)\mu  d\theta 
\\
&\phantom{=}- \bigg(\frac{\partial^{j+1}}{\partial\lambda^{j+1}}\mathcal M(k-1,\lambda_0,\bar\lambda_0) \bigg)\mu.
\end{split}
\end{equation*}
Using the hypotheses of Theorem \ref{thm-diff disc} and Lemma \ref{estimate-high-deriva disc} we can show that
\begin{align*}
|\tilde \beta_{k-1}|\le C \epsilon_{k-1}(1+N\hat M_{\lambda_0}^{j+2}\|\textbf{z}^{\bar \lambda_0}\|_\infty)|\mu|^2.
\end{align*}
Then, we have that 
\begin{align*}
&\frac{1}{|\mu||\mu_1|\cdots |\mu_j|}\sup_{n\in \Z} \left |\sum_{k\in \Z } \hat{\mathcal G}(n,k) \left(\tilde \beta_{k-1}\right) (\mu_1 ,\ldots,\mu_j)  \right |
\\
&\le C\hat L(1+N\hat M_{\lambda_0}^{j+2}\|\textbf{z}^{\bar \lambda_0}\|_\infty)|\mu|.
\end{align*}
This together with \eqref{induc-estima-zeta disc}  implies that \eqref{high-deriva-zeta disc}  holds at point $(\lambda_0, \textbf{z}^{\bar\lambda_0})$ for $i=j+1$.

Similarly, by induction  one can show that $\lambda\mapsto  \hat{\mathcal{T}}_{\lambda}\textbf{z}$ is also differentiable of order $j+1$ at every point in the neighborhood $\hat{\mathcal V}$ of $(\lambda_0, \textbf{z}^{\bar\lambda_0})$ and that  the $(j+1)$-th derivative has the same form as in \eqref{high-deriva-zeta disc}. Hence, by induction, \eqref{high-deriva-zeta disc} is proved.

Now, we show that  $\frac{\partial^i}{\partial \lambda^i} \hat{\mathcal{T}}$ is continuous on $\hat{\mathcal V}$ for all $1\le i\le k$. Without loss of generality, we only show the continuity at $(\lambda_0,\textbf{z}^{\bar\lambda_0})$. In fact, fixing any $1\le i\le k$,
 for every $\boldsymbol{\eta}=(\eta_n)_{n\in \Z}\in \hat{\mathcal Y}$ and $\mu\in\Sigma$ sufficiently small, using \eqref{L disc} and the assumptions of Theorem \ref{thm-diff disc}, we can show that
\begin{align}
&\left\| \frac{\partial^i}{\partial \lambda^i} \hat{\mathcal T}_{\lambda_0+\mu}\textbf{z}^{\bar\lambda_0} - \frac{\partial^i}{\partial \lambda^i} \hat{\mathcal T}_{\lambda_0}\textbf{z}^{\bar\lambda_0} \right\| 
 \le  C\hat L (1+N\hat M_{\lambda_0}^{i+1}\|\boldsymbol{z}^{\bar \lambda_0}\|_\infty) | \mu|
\notag
\end{align}
and
\begin{align}
&\left\| \frac{\partial^i}{\partial \lambda^i} \hat {\mathcal T}_{\lambda_0}(\textbf{z}^{\bar\lambda_0}+\boldsymbol{\eta}) - \frac{\partial^i}{\partial \lambda^i} \hat {\mathcal T}_{\lambda_0}\textbf{z}^{\bar\lambda_0} \right\| 
 \le  C\hat LN\hat M_{\lambda_0}^{i+1}\|\boldsymbol{\eta}\|_\infty.
\notag
\end{align}
Hence,  it is shown that $\frac{\partial^i}{\partial \lambda^i} \hat{\mathcal{T}}$ is continuous at $(\lambda_0,\textbf{z}^{\bar\lambda_0})$. Therefore, it is proved that the map $\lambda \to \hat {\mathcal T}_\lambda \textbf{z}$ is $C^i$ at $(\lambda_0,\textbf{z}^{\bar\lambda_0})\in \hat {\mathcal{W}}$ for all $1\le i\le k$. The proof is completed.
\end{proof}

Finally, Theorem \ref{thm-diff disc} follows readily from Lemmas \ref{lem: diff fixed point disc}, \ref{lem: diff T z disc} and \ref{lem-diff T lambda disc} and its proof is then completed.
\hfill $\square$

\begin{remark}
As in Subsection~\ref{sec: Hyers-Ulam cont}, one can interpret Theorem~\ref{thm-diff disc} in the case when~\eqref{eq: linear difference eq} admits an exponential dichotomy. In particular, it is straightforward to formulate a discrete time version of Corollary~\ref{cor: exp dich continuous}. Moreover, one can easily build discrete time versions of the examples given in Sections \ref{sec: Hyers-Ulam cont} and \ref{sec: example beyond exp dich cont}. Therefore, we refrain from doing so.
\end{remark}

\section*{Acknowledgements}
L. Backes was partially supported by a CNPq-Brazil PQ fellowship under Grant No. 307633/2021-7. D.D. was supported in part by Croatian Science Foundation under the Project IP-2019-04-1239 and by the University of Rijeka under the Projects uniri-prirod-18-9 and uniri-prprirod-19-16. X. Tang was supported by NSFC $\#$12001537, the Start-up Funding of Chongqing Normal University 20XLB033, and the Research Project of Chongqing Education Commission CXQT21014.

\end{document}